\definecolor{Red}{rgb}{1,0,0}
\definecolor{Blue}{rgb}{0,0,1}
\definecolor{Olive}{rgb}{0.41,0.55,0.13}
\definecolor{Green}{rgb}{0,1,0}
\definecolor{MGreen}{rgb}{0,0.8,0}
\definecolor{DGreen}{rgb}{0,0.55,0}
\definecolor{Yellow}{rgb}{1,1,0}
\definecolor{Cyan}{rgb}{0,1,1}
\definecolor{Magenta}{rgb}{1,0,1}
\definecolor{Orange}{rgb}{1,.5,0}
\definecolor{Violet}{rgb}{.5,0,.5}
\definecolor{Purple}{rgb}{.75,0,.25}
\definecolor{Brown}{rgb}{.75,.5,.25}
\definecolor{Grey}{rgb}{.5,.5,.5}
\definecolor{Black}{rgb}{0,0,0}
\def\path{{\tt path}}
\newcommand{\bdm}{\begin{displaymath}}
\newcommand{\edm}{\end{displaymath}}
\newcommand{\bea}{\begin{eqnarray*}}
\newcommand{\eea}{\end{eqnarray*}}
\newcommand{\bean}{\begin{eqnarray}}
\newcommand{\eean}{\end{eqnarray}}
\newcommand{\var}{\mathrm{Var}}
\newtheorem{theorem}{Theorem}
\newtheorem{corollary}{Corollary}
\newtheorem{definition}{Definition}
\newtheorem{lemma}{Lemma}
\newenvironment{proof}{\noindent{\textbf{Proof:}}}{$\blacksquare$\vskip\belowdisplayskip}
\title{Reconstruction of Random Colourings}
\author{Allan Sly \thanks{{Email: sly@stat.berkeley.edu Dept. of Statistics, U.C. Berkeley. Supported by NSF grants DMS-0528488 and DMS-0548249 }}}
\begin{document}
\maketitle

\begin{abstract}
Reconstruction problems have been studied in a number of contexts
including biology, information theory and and statistical physics.  We consider the reconstruction problem for random $k$-colourings on
the $\Delta$-ary tree for large $k$.  Bhatnagar et. al.
\cite{BhVeVi:07} showed non-reconstruction when $\Delta \leq \frac12
k\log k - o(k\log k)$.  We tighten this result and show non-reconstruction
when $\Delta \leq k[\log k + \log \log k + 1 - \ln 2 -o(1)]$ which is very close to the known upper bound on the number of colours needed for  reconstruction, $\Delta \geq k[\log k + \log \log k + 1+o(1)]$.

\end{abstract}

\section{Introduction}

Determining the reconstruction threshold of a Markov random field has been of interest in a number of areas including biology, information theory and and statistical physics.  Reconstruction thresholds on trees are believed to determine the dynamic phase transitions in many constraint satisfaction problems including random K-SAT and random colourings on random graphs.  It is thought that at this poin the space of solutions splits into exponentially many clusters.  The properties of the space of solutions of these problems are of interest to  physicists, probabilists and theoretical computer scientists.

The upper bound on the number of colours required for reconstruction of $ k[\log k + \log \log k + 1+o(1)]\geq \Delta$ was known from the work of \cite{MoPe:03} and \cite{Semerjian:08} and is given by the analysis of a naive reconstruction algorithm which reconstructs the root only when it is known with absolute certainty given the leaves. The problem of finding good lower bounds on the number of colours required for reconstruction is more difficult, it requires showing that the spins on the root and the leaves are asymptotically independent.  The best previous rigorous results for a lower bound on the number of colours needed for reconstruction was $\Delta \leq \frac12 k\log k - o(k\log k)$ in \cite{BhVeVi:07}.  We improve this to $\Delta \leq k[\log k + \log \log k + 1 - \ln 2 -o(1)]$.  Even at a heuristic level no lower bound as good as ours was known.

\subsection{Definitions}
We begin by giving a general description of broadcast models on
trees and the reconstruction problem.  The broadcast model on a tree
$T$ is a model in which information is sent from the root $\rho$
across the edges, which act as noisy channels, to the leaves of $T$.
For some given finite set of characters $\mathcal{C}$ a
configuration on $T$ is an element of $\mathcal{C}^T$, that is an
assignment of a character $\mathcal{C}$ to each vertex. The
broadcast model is a probability distribution on configurations
defined as follows.  Some $|\mathcal{C}|\times |\mathcal{C}|$
probability transition matrix $M$ is chosen as the noisy channel on
each edge.  The spin $\sigma_\rho$ is chosen from $\mathcal{C}$
according to some initial distribution and is then is propagated
along the edges of the tree according to the transition matrix $M$.
That is if vertex $u$ is the parent of $v$ in the tree then spin at
$v$ is defined according to the probabilities
\[
P(\sigma_v = j|\sigma_u=i)=M_{i,j}.
\]
We will focus on the colouring model with $|\mathcal{C}|=k$ which is
given by the transition matrix $M_{i,j} = \frac{1_{i\neq j}}{k-1}$.

Broadcast models and in particular colourings can also be considered
as Gibbs measures on trees. Given a finite set of colours $k$ and a
graph $T=(V,E)$ a $k$-colouring is an assignment of a colour to each
vertex so that adjacent vertices have different colours. The random
$k$-coluring model is then the uniform probability distribution on
valid $k$-colourings of the graph.  It is a Gibbs measure or Markov
random field on the space of configurations $\sigma\in
\{1,\ldots,k\}^V$ given by
\[
P(\sigma)=\frac1{Z}\prod_{(u,v)\in E} 1_{\sigma_u\neq \sigma_v}
\]
where $Z$ is a normalizing constant given by the number of
colourings of $T$.  On an infinite tree more than one Gibbs measure
may exist, the broadcast colouring model corresponds to the free
Gibbs measure.

We will restrict our attention to $\Delta$-ary trees, that is the
infinite rooted tree where every vertex has $\Delta$ offspring.  Let
$L(n)$ denote the spins at distance $n$ from the root and let
$L^i(n)$ denote $L(n)$ conditioned on $\sigma_\rho=i$.
\begin{definition}
We say that a model is \emph{reconstructible} on a tree $T$ if for
some $i,j\in \mathcal{C}$,
\[
\limsup_n d_{TV} (L^i(n),L^j(n)) > 0
\]
where $d_{TV}$ is the total variation distance. When the limsup is 0
we will say the model has \emph{non-reconstruction} on $T$.
\end{definition}
Non-reconstruction is equivalent to the mutual information between
$\sigma_\rho=L(0)$ and $L(n)$ going to 0 as $n$ goes to infinity and
also to $\{L(n)\}_{n=1}^\infty$ having a non-trivial tail
sigma-field. More equivalent formulations are given in
\cite{Mossel:04} Proposition 2.1.

In contrast consider the uniqueness property of a model.
\begin{definition}
We say that a model has \emph{uniqueness} on a tree $T$ if
\[
\limsup_n \quad\sup_{L,L'} \ \ \ d_{TV}
(P(\sigma_\rho=\cdot|L(n)=L),P(\sigma_\rho=\cdot|L(n)=L')) > 0
\]
where the supremum is over all configurations $L,L'$ on the vertices
at distance $n$ from the root.
\end{definition}
Reconstruction implies non-uniqueness and is a strictly stronger
condition.  Essentially uniqueness says that there is some
configuration on the leaves which provides information on the root
while reconstruction says that a typical configuration on the leaves
provides information on the root.

\subsection{Background}
For some parameterized collection of models the key question in
studying reconstruction is finding which models have reconstruction,
which typically involves finding a threshold. This problem naturally
arises in biology, information theory and statistical physics and
involves the trade off between increasing numbers of leaves with
increasingly noisy information as the distance from the root to the
leaves increases. The simplest collection of model is the binary
symmetric channel which is defined on two characters with
\[
M=\left(\begin{array}{cc}1-\epsilon &\epsilon\\ \epsilon &
1-\epsilon\end{array}\right)
\]
for $0<\epsilon<\frac12$ which corresponds to the ferromagnetic
Ising model on the tree with no external field.  It was shown in
\cite{BlRuZa95} and \cite{EvKePeSch:00} that this channel has
reconstruction if and only if $\Delta(1-2\epsilon)^2>1$.

The broadcast model is a natural model for the evolution of
characters of DNA.  In phylogenetic reconstruction the goal is to
reconstruct the ancestry tree of a collection of species given their
genetic data. Daskalakis, Mossel and Roch
\cite{Mossel:04b,DaMoRo:06} proved the conjecture of Mike Steel that
the number of samples required for phylogenetic reconstruction
undergoes a phase transition at the reconstruction threshold for the
binary symmetric channel.

Exact reconstruction thresholds have only been calculated in the
binary symmetric model and binary asymmetric models with
sufficiently small asymmetry \cite{BoChMoRo:06}.  In both these
cases the threshold corresponds to the Kesten-Stigam bound
\cite{KesSti:06}. The Kesten-Stigam bound shows that reconstruction
holds whenever $\Delta\lambda_2(M)^2 > 1$ where $\lambda_2(M)$
denotes the second largest eigenvalue of $M$.  In fact when
$\Delta\lambda_2(M)^2 > 1$ it is possible to asymptotically
reconstruct the root from just knowing the number of times each
character appears on the leaves (census reconstruction) without
using the information on their positions on the leaves. Mossel
\cite{Mossel:01,Mossel:04} showed that the Kesten-Stigam bound is
not the bound for reconstruction in the binary-asymmetric model with
sufficiently large asymmetry or in the Potts model with sufficiently
many characters.

It was shown by \cite{Jonasson:02} that $k$-colourings have
uniqueness on $\Delta$-ary trees if and only if $k\geq \Delta+2$
which is a lower bound on the reconstruction threshold.  Exactly
finding the threshold for reconstruction is difficult so most
attention has been focused on finding its asymptotics as the number
of colours and the degree goes to in infinity. Recently
\cite{BhVeVi:07} greatly improved this bound showing that
non-reconstruction holds when $k\geq C\Delta/\log{\Delta}$ (or
equivalently when $\Delta \leq C^{-1} k\log k$) for $C>2$.  On the
other hand \cite{MoPe:03} showed that when $\Delta \geq k\log k +
o(k\log k)$ then with high probability in $k$ the spin of the root
is exactly determined by the leaves and so reconstruction is
possible.  With a more detailed analysis this argument can be improved to show reconstruction when $k[\log k + \log \log k + 1+o(1)]\geq \Delta$, as was shown in \cite{Semerjian:08}.

This is a large improvement on the Kesten-Stigam bound
which implies reconstruction when $\Delta > (k-1)^2$.  In related
work Mezard and Montanari \cite{MezMon:06} found a variational
principle which establishes bounds on reconstruction for colourings.  However, except in the case of $k=5$ this does not improve upon the bound from Lemma \ref{l:fixedRoot}, see \cite{Semerjian:08,ZdKr:07}.
Our results  establish far tighter bounds with the upper and lower
bounds differing by just $k\log 2$ rather than $\frac12 k \log k$.

\begin{theorem}\label{t:main}
The $k$-colouring model on the $\Delta$-ary tree has reconstruction
when
\[
\Delta \geq k[\log k + \log \log k + 1+o(1)].
\]
and  non-reconstruction when
\[
\Delta \leq k[\log k + \log \log k + 1 - \ln(2) -o(1)].
\]
\end{theorem}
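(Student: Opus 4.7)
The theorem bundles a reconstruction upper bound on $\Delta$ with a non-reconstruction lower bound. The reconstruction direction $\Delta \ge k[\log k + \log\log k + 1 + o(1)]$ is due to \cite{MoPe:03,Semerjian:08}: conditional on $\sigma_\rho = i$, the $\Delta$ children of $\rho$ are i.i.d.\ uniform on $\{1,\ldots,k\}\setminus\{i\}$, so the expected number of colours $j\neq i$ absent from the children is $(k-1)((k-2)/(k-1))^\Delta$; at the stated threshold this is $\Omega(1/\log k)$, and a recursive application up the tree combined with a Poisson concentration argument shows that the root colour is identified with positive probability. I would recapitulate this briefly and concentrate on the non-reconstruction direction.

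For non-reconstruction, the plan is a recursive analysis of the Bayesian posterior. For each vertex $v$ at depth $h\le n$, let $X^c_v = \mathbb{P}(\sigma_v = c \mid L_v(n-h))$ be the posterior of colour $c$ at $v$ given the leaves of the depth-$(n-h)$ subtree rooted at $v$. The hard proper-colouring constraint yields the recursion
\[
X^c_v \;=\; \frac{\prod_{w\in\child(v)}(1-X^c_w)}{\sum_{c'=1}^k \prod_{w\in\child(v)}(1-X^{c'}_w)},
\]
with leaf boundary $X^c_v = \ind\{\sigma_v = c\}$. By Proposition 2.1 of \cite{Mossel:04}, non-reconstruction is equivalent to $\mathbb{E}[\,\lVert X_{\rho}-k^{-1}\mathbf 1\rVert_1\,]\to 0$ as $n\to\infty$, which reduces to showing that the bias $B^i_v := kX^i_v-1$, taken in expectation under the law conditional on $\sigma_v=i$, tends to zero as the depth below $v$ grows.

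I would prove $\mathbb E[B^i_\rho]\to 0$ by induction on $n$. Linearising the recursion around the uniform fixed point $X^c\equiv 1/k$ gives a branching-type recursion for $B^i$ whose leading per-child multiplier is the Kesten--Stigam factor, which alone recovers only $\Delta = O((k-1)^2)$. To go further, I would track $B^i$ alongside a second functional measuring the probability that a child's posterior concentrates sharply on a single colour --- the dominant atypical event, governed by the event that some colour is missing from the leaves of the child's subtree and whose distribution is well described by a Poisson approximation. A coupled two-dimensional recursion in these "typical-bias" and "atypical-mass" quantities is then shown to contract under the hypothesis on $\Delta$.

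The main obstacle, and the source of the sharp constant $-\ln 2$, lies in carefully bookkeeping which colour a biased child is biased towards. Earlier work \cite{BhVeVi:07} essentially used a single union bound over the $k$ candidate absent colours and paid a factor $\tfrac12$ in the exponent. The improvement here should come from exploiting that, conditional on $\sigma_\rho=i$, the "absent" colour at a given child is either $i$ itself (reinforcing the bias toward $i$) or some other colour (pushing in the opposite direction), and the two contributions partially cancel in the recursion for $B^i$. Making this cancellation rigorous while controlling the higher-order corrections to the linearised recursion uniformly over the $n$ levels of the tree is the bulk of the technical work; the factor $\ln 2$ is precisely the gain extracted from this refined averaging over what the earlier union bound left on the table.
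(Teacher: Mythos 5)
Your sketch has the right skeleton (posterior recursion, reduction to a first-moment quantity, a ``typical vs.\ atypical'' split), but the non-reconstruction half contains a genuine gap: the ``coupled two-dimensional recursion in typical-bias and atypical-mass'' is asserted, never derived, and the mechanism you propose for the constant $\ln 2$ is not where it comes from. The sharp constant is decided in the regime where the child posteriors are close to point masses --- the iteration starts at $x_0=E[X^+(0)]=1$ and must be driven down from values near $1$ --- which is exactly the regime where linearising around the uniform fixed point with a Kesten--Stigam multiplier has no traction. Moreover the gain over \cite{BhVeVi:07} is not a signed cancellation between a child whose missing colour is $i$ and children whose missing colour is something else: it is a harmonic-mean effect. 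When $N$ rival colours remain unblocked at the root, the posterior of the true colour is $1/(1+N)$, so the one-step average is $E[1/(1+N)]\approx \frac{1-e^{-\lambda}}{\lambda}\approx 1-\lambda/2$ with $\lambda\approx(k-1)e^{-x_nD}$; the factor $\frac12$ (dominant event: exactly one rival survives, posterior exactly $\frac12$) is precisely the $\ln 2$, via the condition $x<\frac12 e^{x-\beta^*}$ for all $x$, which holds iff $\beta^*<1-\ln 2$.

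Concretely, the steps your plan is missing (and which constitute the paper's proof) are: (i) conditioning on the children's colours $\mathcal{F}$ and on $U_i=\prod_{j:\sigma_{u_j}=i}(1-Y_{ij})$, a symmetrization argument --- exchanging in distribution the roles of colour $1$ and a rival colour $\ell$ --- yields the key bound $E[X^+(n+1)\mid\mathcal{F},\{U_i\}]\le \bigl(1+\sum_{i\ge 2}U_i\bigr)^{-1}$; without a substitute for this, the dependence among the $Z_i$ blocks any naive bound; (ii) the identity $\frac{1}{1+x}=\int_0^1 s^x\,ds$, convexity of $s^u$, and stochastic domination of the multinomial colour counts by i.i.d.\ Poisson$(D)$ variables turn this into the scalar recursion $x_{n+1}\le p+\frac{1-\exp(-(k-1)e^{-x_nD})}{(k-1)e^{-x_nD}}$ with $p=o(k^{-1})$, whose iteration is then shown to fall below $2/k$ exactly when $\beta^*<1-\ln 2$; (iii) once $x_n\le 2/k$ a separate, crude second-moment step (variance bounds on the $Z_i$ together with the change-of-measure identity $x_n-\frac1k=E\sum_i(X_i(n)-\frac1k)^2$, which is also what justifies that $x_n\to\frac1k$ implies non-reconstruction --- a point your reduction to $E[B^i_\rho]\to 0$ leaves implicit) gives the contraction $x_{n+1}-\frac1k\le\frac12(x_n-\frac1k)$ for $\Delta\le 2k\log k$. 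Deferring the reconstruction half to \cite{MoPe:03,Semerjian:08} is acceptable (the paper reproves it via the recursion $p_{n+1}\ge(1-e^{-p_nD})^{k-1}-o(k^{-1})$), but as written the non-reconstruction half of your proposal is a program rather than a proof, and its central heuristic points at the wrong source of the constant.
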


\subsection{Applications to  Statistical Physics}

The reconstruction threshold on trees is believed to play a critical
role in the dynamic phase transitions in certain glassy systems given by random constraint satisfaction problems such as random K-SAT and random colourings on random graphs. We will briefly describe what is conjectured by
physicists about such systems \cite{KMRSZ:07,ZdKr:07},
generally without rigorous proof, and why understanding the
reconstruction threshold for colourings plays an important role in
such systems.

The Erd\H{o}s-R\'enyi graph $G(n,p)$ is a random graph on $n$
vertices where every pair of vertices is connected  with probability
$p$.  To maintain constant average degree $\Delta$ we let
$p=\Delta/n$. The $k$-colouring model on $G(n,\Delta/n)$ or random
$\Delta$-regular graphs undergoes several phase transitions as
$\Delta$ grows. If we consider the space of solutions to a random
colouring model where two colourings are adjacent if they differ at
a single vertex then for the smallest values of $\Delta$ the space
of solutions forms a large connected component.  Above the
clustering transition $\Delta_d$ the space of solutions breaks into
exponentially many disconnected clusters and has no giant component
with a constant fraction of the probability. This replica symmetry
breaking transition is believed to occur at $\Delta_d = k[\log k +
\log \log k + \alpha + o(1)]$ with differing values of $\alpha$
conjectured: $\alpha=1-\ln 2$ in \cite{KrPaWe:04} and $\alpha=1$ in \cite{KMRSZ:07}.
In a recent remarkable result \cite{AchlioptasCoOg:08} rigorously proved that when $(1+o(1)) k\log k \leq \Delta \leq (2- o(1)) k\log k$ then the space of solutions indeed breaks into exponentially many small clusters. A second
transition occurs when most clusters have frozen spins, that is
vertices which have the same colour in every colouring in the
cluster. This phase transition is believed to occur at $\Delta_r =
k[\log k + \log \log k + 1 + o(1)]$ \cite{Semerjian:08,ZdKr:07} and is the best upper bound known for $\Delta_d$. Two more
transitions are believed to occur, condensation where the size of
the clusters takes on a Poisson-Dirichlet process and the colouring
threshold beyond which no more colourings are possible, are
conjectured to occur at $\Delta_c = 2k\log k - \log k - 2\log 2
+o(1)$ and $\Delta_s= 2k\log k - \log k - 1 +o(1)$ respectively
\cite{ZdKr:07}. Similar results are also expected to hold for K-SAT
and other random constraint satisfiability problems \cite{KMRSZ:07}.

Both random regular and Erd\H{o}s-R\'enyi random graphs are locally
tree-like.  Asymptotically in a random regular graph the
neigbourhood of a random vertex is a random regular tree  and for
Erd\H{o}s-R\'enyi random graphs it is a Galton-Watson branching
process tree with Poisson offspring distribution. It is conjectured
\cite{KMRSZ:07} that the reconstruction threshold on the
corresponding tree is exactly the clustering threshold $\Delta_d$ on
the random graph. As such rigorous estimates of the reconstruction
problem can be seen as part of a larger program of understanding
glassy systems from constraint satisfaction problems.

The clustering threshold is also believed to play an important role
in the efficiency of MCMC algorithms for finding and sampling from
colourings of the graphs.  MCMC algorithms are believed to be
efficient up to the clustering threshold but experience an
exponential slowdown beyond it \cite{KMRSZ:07}.  This is to be
expected since a local MCMC algorithm can not move between clusters
each of which has exponentially small probability.  Rigorous proofs
of rapid mixing of MCMC algorithms, such as the Glauber dynamics,
fall a long way behind.  For random regular graphs, results of
\cite{DFHV:04} imply rapid mixing when $k\geq 1.49\Delta$, well
below the reconstruction threshold and even the uniqueness
threshold. Even less is known for Erd\H{o}s-R\'enyi random graphs as
almost all MCMC results are given in terms of the maximum degree
which in this case grows with $n$. Polynomial time mixing of the
Glauber dynamics has been shown \cite{MoSly:07} for a constant
number of colours in terms of $\Delta$.

\subsection{Open Problem}

If the probability that the leaves uniquely determine the spin at
the root does not go to 0 as $n$ goes to infinity then the model has
reconstruction.  It is natural to ask is this a necessary condition for reconstruction.  When $k=5$ and $\Delta=14$ it was shown in \cite{MezMon:06} using a variational principle that reconstruction holds but the probability that the leaves fix the root goes to 0.  However, this is the only case in which the variational principle gives an upper bound on the number of colours required for reconstruction which is better than the bound of the leaves fixing the root.  It remains open to determine if for large numbers of colours/degrees if this is exactly the
reconstruction threshold. Numerical results of \cite{ZdKr:07} suggest this
is in fact not the case and there are two separate thresholds.
Answering this question would be of significant interest.

\section{Proofs}

We introduce the notation we use in the proofs.  We denote the
colours by $\mathcal{C}=\{1,\ldots,k\}$ and let $T$ be the
$\Delta$-ary tree rooted at $\rho$. Let $u_1,\ldots,u_\Delta$ be the
children of $\rho$ and let $T_j$ denote the subtree of descendants
of $u_j$. Let $P(\sigma)$ denote the free measure on colourings on
the $\Delta$-ary tree.  Let $L(n)$ denote the spins at distance $n$
from $\rho$ and let $L_j(n)$ denote the spins on level $n$ in the
subtree $T_j$. Let $L^{i}(n)$ and $L^{i}_j(n)$ respectively denote
$L(n)$ and $L_j(n)$ conditioned on $\sigma_\rho=i$.  For a boundary condition $L$ on the spins at distance $n$ from $\rho$ define the deterministic function $f_n$ as
\[
f_n(i,L) = P(\sigma_\rho=i | L(n) = L).
\]
By the recursive nature of the tree we also have that
\[
f_n(i,L) = P(\sigma_{u_j}=i | L_j(n) = L).
\]
Now define  $X_i(n)=X_i$ by
\[
X_i(n)=f_n(i,L(n)).
\]
These random variables are a deterministic function of the random
configuration $L(n)$ of the leaves which gives the marginal
probability that the root is in state $i$.  By symmetry the $X_i$
are exchangable. Now we define two distributions
\[
X^+ = X^{+}(n)=f_n(1,L^{1}(n))
\]
and
\[
X^- = X^{-}(n)=f_n(1,L^{2}(n))
\]
which are different distributions because $L^1(n)$ and $L^2(n)$ are
different distributions of configurations on the leaves. We will
establish non-reconstruction by showing that $X^+$ and $X^-$ both
converge to $\frac1k$ in probability as $n$ goes to infinity.  Let
$x_n$ and $z_n$ denote $EX^{+}(n)$ and $E(X^{+}(n)-\frac1k)^2$
respectively.   By symmetry we have
\[
f_n(i_2,L^{i_1}(n))\stackrel{d}{=}\begin{cases} X^+ & i_1=i_2, \\
X^- & \hbox{otherwise,} \end{cases}
\]
and the set $\{f_n(i,L^{1}(n)):2\leq i \leq k\}$ is
exchangeable.  Moreover they are conditionally exchangeable given $f_n(1,L^{1}(n))$.  Now define
\[
Y_{ij}= Y_{ij}(n) = f_n(i,L^{1}_j(n)).
\]
This is equal to the probability that $\sigma_{u_j}=i$ given the random configuration $L^{1}_j(n)$ on the spins on level $n$ in the subtree $T_j$.  As
the configurations on the subtrees are conditionally independent given the spin at
the root we have that the random vectors $Y_j=\{Y_{1j},\ldots,
Y_{kj} \}$ are conditionally independent.  Further given $\sigma_j$ and $Y_{\sigma_j j}$ the random variables $\{Y_{ij}: i\neq \sigma_j \}$ are conditionally exchangeable.  We make use of these symmetries to simplify the anaylsis. Given the standard Gibbs measure
recursions on trees we have that
\[
f_{n+1}(1,L(n+1))= \frac{\prod_{j=1}^\Delta  (1-
f_n(1,L_j(n)))}{\sum_{i=1}^k \prod_{j=1}^\Delta  (1-
f_n(i,L_j(n)))}
\]
and so
\[
X^{+}(n+1)= \frac{Z_1}{\sum_{i=1}^k Z_i}
\]
where
\[
Z_i = \prod_{j=1}^\Delta (1-Y_{ij}).
\]

The following lemma, which can be viewed as the analogue of Lemma 1 of
\cite{BCRM:06}, allows us to relate the first and second moments of
$X^+$.

\begin{lemma}\label{l:changeOfMeasure}
We have that
\[
x_n = EX^+= E \sum_{i=1}^k P(\sigma_\rho=i|L^{1}(n))^2 = E
\sum_{i=1}^k (X_i(n))^2.
\]
and
\[
x_n-\frac1k = EX^+-\frac1k=  E \sum_{i=1}^k (X_i(n)-\frac1k)^2\geq
E(X^+-\frac1k)^2 = z_n.
\]
\end{lemma}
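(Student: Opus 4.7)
The plan is to use a change-of-measure identity between the conditional law of $L^1(n)$ and the free law of $L(n)$, together with the colour-symmetry (exchangeability of the $X_i$'s under the free measure) and the constraint $\sum_i X_i = 1$. The starting point is Bayes' rule: since $P(\sigma_\rho=1)=1/k$,
\[
P(L^1(n)=L) \;=\; k\, P(L(n)=L)\, f_n(1,L),
\]
which gives the Radon--Nikodym identity $E_{L^1}[g(L)] = k\, E_L[X_1\, g(L)]$ for every function $g$ of the leaf configuration.

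Applied to $g(L)=f_n(1,L)$, this identity yields $EX^+ = k\, E[X_1^2]$, and by colour exchangeability under the free measure, $k E[X_1^2] = E\sum_i X_i^2$; this is the outer equality. For the middle equality, I apply the change of measure with $g(L)=\sum_i X_i^2$ to get $E_{L^1}[\sum_i X_i^2] = k\, E[X_1 \sum_i X_i^2]$, and then use exchangeability to write $E[X_1 \sum_i X_i^2] = \frac{1}{k}\,E[\sum_j X_j \cdot \sum_i X_i^2]$; since $\sum_j X_j=1$, the right side collapses to $\frac{1}{k}E\sum_i X_i^2$. Interpreting $P(\sigma_\rho=i\mid L^1(n))$ as $f_n(i,L^1(n))$, this shows $E\sum_i P(\sigma_\rho=i\mid L^1(n))^2 = E\sum_i X_i^2$, completing the chain.

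For the second statement, expanding $(X_i-\tfrac1k)^2$ and using $\sum_i X_i=1$ gives the pointwise identity $\sum_i(X_i-\tfrac1k)^2 = \sum_i X_i^2 - \tfrac1k$, so taking expectations yields $E\sum_i(X_i-\tfrac1k)^2 = x_n - \tfrac1k$. For the inequality, I again apply the change of measure, this time to $g(L)=(X_1-\tfrac1k)^2$, obtaining
\[
E(X^+-\tfrac1k)^2 \;=\; k\, E\bigl[X_1\,(X_1-\tfrac1k)^2\bigr] \;=\; E\sum_i X_i\,(X_i-\tfrac1k)^2,
\]
where the second equality uses colour exchangeability exactly as in the middle step above. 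Since $0\le X_i\le 1$, each term satisfies $X_i(X_i-\tfrac1k)^2 \le (X_i-\tfrac1k)^2$ pointwise, and summing gives $E(X^+-\tfrac1k)^2 \le E\sum_i(X_i-\tfrac1k)^2 = x_n-\tfrac1k$.

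The argument is essentially bookkeeping; there is no real obstacle beyond keeping track of which measure ($L$ or $L^1$) each expectation is under and invoking exchangeability at the correct moment. The two algebraic ingredients that make everything collapse are the change-of-measure factor $kX_1$ and the identity $\sum_i X_i=1$, and the final inequality is driven purely by the uniform bound $X_i\le 1$.
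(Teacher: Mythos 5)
Your proof is correct and follows essentially the same route as the paper: the Bayes change-of-measure identity $P(L^1(n)=L)=k\,P(L(n)=L)f_n(1,L)$, colour exchangeability of the $X_i$, and the constraint $\sum_i X_i=1$. In fact you make explicit two steps the paper's proof leaves implicit — the middle equality $E\sum_i P(\sigma_\rho=i\mid L^1(n))^2=E\sum_i X_i^2$ and the inequality $z_n\le x_n-\frac1k$ via $E(X^+-\frac1k)^2=E\sum_i X_i(X_i-\frac1k)^2\le E\sum_i(X_i-\frac1k)^2$ — which is exactly how those gaps should be filled.
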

\begin{proof}
From the definition of conditional probabilities and of $f_n$ and the fact that
$P(\sigma_\rho=1)=\frac1k$ we have that
\begin{align*}
Ef_n(1,L^{1}(n)) &= \sum_L f_n(1,L)
P(L(n) = L|\sigma_\rho=1)\\
&= \sum_L \frac{P(L(n)=L,\sigma_\rho=1)}{P(\sigma_\rho=1)} f_n(1,L)\\
&= k\sum_L P(L(n)=L) f_n(1,L)^2\\
&= k E(X_1(n))^2\\
&= E \sum_{i=1}^k (X_i(n))^2
\end{align*}
and
\[
E \sum_{i=1}^k (X_i(n)-\frac1k)^2  = E \sum_{i=1}^k (X_i(n))^2 -
\frac2k E \sum_{i=1}^k X_i(n) + k\frac1{k^2}= EX^+-\frac1k.
\]
\end{proof}

\begin{corollary}\label{c:changeOfMeasure}
We have that $x_n \geq\frac1k$ and that
\[
\lim_n x_n = \frac1k.
\]
implies non-reconstruction.
\end{corollary}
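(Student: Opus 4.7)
The plan is in two parts, corresponding to the two assertions of the corollary.

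For the inequality $x_n \geq \frac{1}{k}$, I would use the identity $\sum_{i=1}^{k} X_i(n) = 1$ (since $X_i(n)$ is the posterior probability that $\sigma_\rho = i$). Applying Cauchy--Schwarz pointwise gives $\sum_i X_i(n)^2 \geq \frac{1}{k}\bigl(\sum_i X_i(n)\bigr)^2 = \frac{1}{k}$, so taking expectations and invoking Lemma~\ref{l:changeOfMeasure} yields $x_n = E\sum_i X_i(n)^2 \geq \frac{1}{k}$. This step is immediate.

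For the implication $\lim_n x_n = \frac{1}{k} \Rightarrow$ non-reconstruction, I would use the second identity in Lemma~\ref{l:changeOfMeasure}: $x_n - \frac{1}{k} = E\sum_{i=1}^k (X_i(n) - \frac{1}{k})^2$. By the exchangeability of the $X_i$ under the unconditional measure, each term $E(X_i(n) - \frac{1}{k})^2$ equals $(x_n - \frac{1}{k})/k$, and hence tends to $0$. Thus $X_i(n) \to \frac{1}{k}$ in $L^2$, and in particular in $L^1$, for every $i \in \{1,\ldots,k\}$.

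To convert this into the statement $d_{TV}(L^i(n), L^j(n)) \to 0$, I would use the change-of-measure identity $P(L(n) = L \mid \sigma_\rho = i) = k\,P(L(n) = L)\,f_n(i,L)$, which follows from Bayes' rule and $P(\sigma_\rho = i) = \frac{1}{k}$. Taking $i=1$, $j=2$ (the general case is identical by symmetry),
\begin{align*}
d_{TV}(L^1(n), L^2(n)) &= \tfrac{1}{2} \sum_L \bigl| P(L(n)=L \mid \sigma_\rho=1) - P(L(n)=L \mid \sigma_\rho=2) \bigr| \\
&= \tfrac{k}{2} E\bigl|X_1(n) - X_2(n)\bigr|.
\end{align*}
By the triangle inequality and Cauchy--Schwarz,
\[
E|X_1(n) - X_2(n)| \leq E|X_1(n) - \tfrac{1}{k}| + E|X_2(n) - \tfrac{1}{k}| \leq 2\sqrt{(x_n - \tfrac{1}{k})/k},
\]
which tends to $0$ by hypothesis. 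This proves $\limsup_n d_{TV}(L^i(n), L^j(n)) = 0$ for every pair $i,j$, i.e.\ non-reconstruction.

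There is no real obstacle here; the corollary is essentially a bookkeeping consequence of Lemma~\ref{l:changeOfMeasure} combined with the change-of-measure formula and the elementary fact that $L^2$ convergence of the posteriors to the uniform vector implies $L^1$ closeness of the conditional leaf distributions. The only subtlety worth flagging in the write-up is the appeal to symmetry to pass from $E\sum_i(X_i - \tfrac{1}{k})^2$ to the individual $E(X_i - \tfrac{1}{k})^2$, which uses that the unconditional law of $(X_1,\ldots,X_k)$ is invariant under permutations of the colour labels.
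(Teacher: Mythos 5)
Your proof is correct and follows essentially the same route as the paper: the inequality comes from Lemma~\ref{l:changeOfMeasure} (your Cauchy--Schwarz on $\sum_i X_i(n)=1$ is equivalent to the paper's $x_n\geq z_n+\frac1k$), and the implication starts from $E\sum_i(X_i(n)-\frac1k)^2\to 0$ exactly as the paper does. The only difference is that you spell out the final step --- converting $L^2$ convergence of the posteriors into $d_{TV}(L^i(n),L^j(n))\to 0$ via the change-of-measure identity $P(L(n)=L\mid\sigma_\rho=i)=k\,P(L(n)=L)f_n(i,L)$ --- which the paper leaves implicit, appealing to the standard equivalent formulations of non-reconstruction; your bound $d_{TV}\leq\sqrt{k(x_n-\frac1k)}$ is valid since $k$ is fixed as $n\to\infty$.
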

\begin{proof}
We have that $x_n \geq z_n+\frac1k \geq \frac1k$.  If $x_n$
converges to $\frac1k$ then
\[
\sum_{i=1}^k E\left(X_i(n)-\frac1k\right)^2\rightarrow 0
\]
which implies non-reconstruction.
\end{proof}

\subsection{Non-reconstruction}

Our analysis is split into two phases, the first when $x_n$ is close
to 1 and the second when $x_n$ is close to $\frac1k$.

\begin{lemma}\label{l:DecayBoundsNear1}
Suppose that $\beta<1-\log 2$.  Then for sufficiently large $k$ if
$\Delta<k[\log k + \log \log k +\beta]$ then
\[
\limsup_n x_n \leq \frac2k.
\]
\end{lemma}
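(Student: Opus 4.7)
The plan is to iterate the recursion $X^{+}(n+1) = Z_1/\sum_{i=1}^{k} Z_i$, with $Z_i = \prod_j(1-Y_{ij})$, tracking the evolution of $x_n = \expec X^{+}(n)$ from the initial value $x_0 = 1$ and establishing enough one-step contraction to push the sequence below $2/k$ in the limit.

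First I would exploit the factorisations induced by the tree structure. Conditional on $\sigma_\rho = 1$, the children's colours $\sigma_1,\ldots,\sigma_\Delta$ are i.i.d.\ uniform on $\{2,\ldots,k\}$, and given $\Sigma=(\sigma_j)$ the vectors $(Y_{1j},\ldots,Y_{kj})$ are independent across $j$ with $Y_{\sigma_j j}\sim X^{+}(n)$ and the remaining entries conditionally exchangeable and summing to $1-Y_{\sigma_j j}$. This lets me rewrite
\[
\frac{1}{X^{+}(n+1)} \;=\; 1 + \sum_{i=2}^{k}\prod_{j=1}^{\Delta}\frac{1-Y_{ij}}{1-Y_{1j}},
\]
so $x_{n+1}$ becomes a functional of the distribution of $X^{+}(n)$ that factors across $j$ conditional on $\Sigma$. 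Corollary~\ref{c:changeOfMeasure} provides the lower bound $x_n\geq 1/k$, fixing the direction of the recursion, and also controls the second-moment quantity $E(X^{+}-1/k)^2$ by $x_n-1/k$.

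The core calculation is to establish a one-step inequality that drives the iteration below $2/k$. As orientation, in the pure-broadcast limit where $Y_{ij}\in\{0,1\}$ one has $X^{+}(n+1)=1/(1+m)$ where $m$ counts colours in $\{2,\ldots,k\}$ missed by $\Sigma$. A coupon-collector computation shows that under $\Delta = k[\log k+\log\log k+\beta]$ the count $m$ is asymptotically Poisson with mean $\mu\sim e^{-\beta}/\log k$, giving $\expec[1/(1+m)]\sim 1-\mu/2$ and hence strict decrease from $1$ each step. The heart of the argument is to iterate this computation while accounting for the additional decrease coming from the spread of $Y_{ij}$ away from $\{0,1\}$, and to verify that once $\beta<1-\log 2$, the compounded contraction is strong enough to sweep $x_n$ past any level above $2/k$.

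The hardest step will be obtaining the sharp constant $1-\log 2$. A cruder second-moment argument along Bhatnagar--Vera--Vigoda lines loses a factor of two in the critical degree, so one needs a more refined treatment that tracks the distribution of $X^{+}(n)$ rather than only its mean. I expect the proof to condition carefully on $\Sigma$, exploit the conditional exchangeability of $\{Y_{ij}:i\neq \sigma_j\}$, and apply concentration of the $\Delta$-fold sums of log-ratios $\log((1-Y_{ij})/(1-Y_{1j}))$ to show the denominator $\sum_{i\geq 2}Z_i/Z_1$ is concentrated above a value that forces $x_{n+1}$ strictly below $x_n$ throughout the range $[2/k,1]$. The $\log 2$ correction should emerge from integrating the one-step map against the Poisson coupon-collector statistics, with $1-\log 2$ being precisely the largest $\beta$ for which the integrated bound still collapses $x_n$ down to order $1/k$.
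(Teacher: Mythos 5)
Your heuristic picture is the right one — the Poisson coupon-collector statistics of the children's colours, the approximation $E[1/(1+m)]\approx 1-\mu/2$ for small Poisson mean $\mu\sim e^{-\beta}/\log k$, and the identification of $1-\log 2$ as the critical value coming from that factor of $\tfrac12$ all match the actual mechanism (the paper's Lemma \ref{l:sequenceDecay} runs exactly this fixed-point analysis of the scalar map $g(y)=p+\frac{1-\exp(-(k-1)e^{-yD})}{(k-1)e^{-yD}}$, and Lemma \ref{l:multinomial} supplies the multinomial--Poisson coupling you invoke). But there is a genuine gap at the one step that carries all the difficulty: you never give a valid mechanism for upper-bounding $x_{n+1}=E\bigl[Z_1/\sum_{i=1}^k Z_i\bigr]$ by the coupon-collector quantity. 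Writing $1/X^{+}(n+1)=1+\sum_{i\ge 2}\prod_j\frac{1-Y_{ij}}{1-Y_{1j}}$ and applying Jensen to $1/(1+S)$ only gives a \emph{lower} bound on $x_{n+1}$; an upper bound requires controlling the correlation between the numerator $Z_1$ (which involves $Y_{1j}$ for \emph{all} children $j$) and the denominator. Your proposed tool — concentration of the $\Delta$-fold sums of log-ratios $\log\frac{1-Y_{ij}}{1-Y_{1j}}$ — is ill-suited to the regime this lemma lives in: when $x_n$ is near $1$ the $Y_{ij}$ are essentially Bernoulli, $1-Y_{1j}$ can be close to $0$ so the log-ratios are unbounded, and the relevant randomness (how many colours $i$ have $Z_i$ not essentially forced to $0$) is a discrete Poisson count of small mean, not a concentrated sum. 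A concentration argument here would not recover the sharp constant and would likely not close at all.

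The paper bridges this gap with a different, and quite specific, device: it introduces $U_i=\prod_{j:\sigma_{u_j}=i}(1-Y_{ij})$ and the truncated products $W_1,W_\ell$, and uses the conditional exchangeability of $\{Y_{ij}:i\neq\sigma_j\}$ to justify a swap of $W_1$ and $W_\ell$ in distribution (equation \eqref{e:exchangeability}); a sign/correlation argument then yields the deterministic-looking conditional bound $E[X^{+}(n+1)\mid\mathcal{F},\{U_i\}]\le\bigl(1+\sum_{i=2}^k U_i\bigr)^{-1}$, which eliminates the numerator--denominator correlation entirely and reduces the problem to the $U_i$, i.e.\ to products over only the children whose colour matches $i$. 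After that, the exact identity $\frac1{1+x}=\int_0^1 s^x\,ds$ together with the convexity bound $s^u\le 1-(1-s)u$ for $0\le u\le 1$ linearizes the expression in the $U_i$, and only then does the Poisson coupling and the recursion analysis you sketched take over. Without this symmetrization step (or some substitute of comparable strength) your outline does not yield the one-step inequality you need, so as written the proposal is an accurate roadmap of the answer's shape but not a proof.
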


\begin{proof}
We fix the colour of the root to be 1 let $\mathcal{F}$ denote the
sigma-algebra generated by $\{\sigma_{u_j}:1\leq j \leq \Delta\}$
the colours of the neighbours of the root.  For $2\leq i \leq k$
let $b_i=\#\{j:\sigma_{u_j}=i\}$, the number of times each colour
appears amongst the neighbours of the root.  For $1\leq i \leq k$
define
\[
U_i=\prod_{1\leq j \leq \Delta:\sigma_{u_j}= i} (1-Y_{ij}).
\]
We will use the symmetries and exchangeability of the model to reduce the problem to considering a random variable only involving the $U_i$.  Conditional on $\mathcal{F}$, the $U_i$ are independent and are distributed as the
product of $b_i$ independent copies of $X^+(n)$ and $0\leq U_i \leq 1$ for all $i$. Fix an $\ell$ with $2\leq \ell \leq k$.  Let $W_1$ and $W_\ell$ be defined by
\[
W_1=  \prod_{1\leq j \leq
\Delta:\sigma_{u_j}\neq \ell} (1-Y_{1j}),\quad W_\ell = \prod_{1\leq j \leq
\Delta:\sigma_{u_j}\neq \ell} (1-Y_{\ell j})
\]
so $Z_\ell =W_\ell  U_\ell$. Define
\[
\widetilde{Z}_\ell =W_1 U_\ell ,
\]
and
\[
\widetilde{Z}_1 = W_\ell \prod_{1\leq j \leq
\Delta:\sigma_{u_j}= \ell}(1-Y_{1j}),
\]
and for $i\not\in \{1,\ell\}$,
\[
\widetilde{Z}_i = Z_i.
\]

As we noted earlier $Y_j=\{Y_{1j},\ldots,
Y_{kj} \}$ are conditionally independent given $\mathcal{F}$ and for each $j$ given $\sigma_j$ and $Y_{\sigma_j j}$ the random variables $\{Y_{ij}: i\neq \sigma_j \}$ are conditionally exchangeable. It follows that
\begin{align}\label{e:exchangeability}
& \left(W_1,W_\ell,Z_1,\ldots,Z_k,U_1\ldots,U_k, \sigma_1,\ldots,\sigma_\Delta \right)\nonumber\\
 \stackrel{d}{=} & \left(W_\ell,W_1, \widetilde{Z}_1,\ldots, \widetilde{Z}_k,U_1\ldots,U_k, \sigma_1,\ldots,\sigma_\Delta \right).
\end{align}
where we denote equality as in distributions of random vectors.  Since $(W_1 + \sum_{i=2}^n Z_i) - (W_\ell + \sum_{i=2}^n \widetilde{Z}_i) = (W_1-W_\ell)(1-U_\ell)$ has the same sign as $W_1-W_\ell$ then $ \frac{1}{W_1 + \sum_{i=2}^n Z_i} - \frac{1}{W_\ell + \sum_{i=2}^n \widetilde{Z}_i} $ has the opposite sign as  $W_1-W_\ell$.
Applying the equality in distribution of equation \eqref{e:exchangeability} we have that
\begin{align*}
&E\left[ \left . \frac{W_\ell - W_1}{W_1 + \sum_{i=2}^n Z_i}\right|  \mathcal{F},\{U_i\} \right]\\
=& \frac12 E\left[ \left . \frac{W_\ell - W_1}{W_1 + \sum_{i=2}^n Z_i} + \frac{W_1 - W_\ell}{W_\ell + \sum_{i=2}^n \widetilde{Z}_i}\right|  \mathcal{F},\{U_i\} \right]\\
=&  \frac12 E\left[ \left . \left( W_\ell - W_1 \right) \left( \frac{1}{W_1 + \sum_{i=2}^n Z_i} - \frac{1}{W_\ell + \sum_{i=2}^n \widetilde{Z}_i} \right) \right|  \mathcal{F},\{U_i\} \right]\\
 \geq  & \ \ 0
\end{align*}
where the first equality follows using equality in distributions of the random vectors and the inequality follows from the two terms of the product having the same sign.  Since $0\leq Z_1\leq W_1\leq 1$ we have that,
\begin{align*}
E\left[ \left . \frac{Z_1}{Z_1 + \sum_{i=2}^n Z_i}\right|  \mathcal{F},\{U_i\} \right]&\leq E\left[ \left . \frac{W_1}{W_1 + \sum_{i=2}^n Z_i}\right|  \mathcal{F},\{U_i\} \right]\\
&\leq  E\left[ \left . \frac{W_\ell}{W_1 + \sum_{i=2}^n Z_i}\right|  \mathcal{F},\{U_i\} \right]\\
& \leq E\left[ \left . \frac{W_\ell}{Z_1 + \sum_{i=2}^n Z_i}\right|  \mathcal{F},\{U_i\} \right]
\end{align*}
and so
\[
E\left[ \left . \frac{Z_1\left(1+\sum_{i=2}^k U_i\right)}{ \sum_{i=1}^n Z_i}\right|  \mathcal{F},\{U_i\} \right] \leq \sum_{i=1}^k E\left[ \left . \frac{Z_i}{\sum_{i=1}^n Z_i}\right|  \mathcal{F},\{U_i\} \right]=1
\]
and hence
\[
E\left[ \left . X^+(n+1) \right|  \mathcal{F},\{U_i\} \right] = E\left[ \left . \frac{Z_1}{ \sum_{i=1}^n Z_i}\right|  \mathcal{F},\{U_i\} \right] \leq \frac1{1+\sum_{i=2}^k U_i}.
\]

Now using the fact that $\frac1{1+x}=\int_0^1 s^x ds$ we have that
\[
\frac{1 }{1+ \sum_{i=2}^{k} U_i} = \int_0^1 s^{\sum_{i=2}^{k}
U_i} ds
\]
As $s^u$ is convex as a function of $u$ we have that $s^u\leq
s^0(1-u) + s^1 u$ when $0\leq u \leq 1$ and so since $0\leq U_i \leq 1$ we
have that $E s^{U_i} \leq  (1-E U_i) + s EU_i = 1 -(1-s)EU_i$.   Since conditional on $\mathcal{F}$ the $U_i$ are independent and are distributed as the product of $b_i$ independent copies of $X^+(n)$ we have that,
\begin{align*}
E \left[\left. X^{+}(n+1)\right|\mathcal{F}\right] &\leq \int_0^1
\prod_{i=2}^{k} (1 -(1-s)E[U_i|\mathcal{F}]) ds\\ &= \int_0^1
\prod_{i=2}^{k} (1 -(1-s)(1-x_n)^{b_i}) ds.
\end{align*}
Now the colours $\sigma_{u_j}$ are chosen independently and
uniformly from the set $\{2,\ldots,k\}$ so $(b_2,\ldots,b_k)$ has a
multinominal distribution. Let $\beta<\beta^*< 1-\log 2$ and let
$\widetilde{b}_i$ be iid random variables distributed as
Poisson$(D)$ where $D=\log k + \log \log k +\beta^*$.  By Lemma
\ref{l:multinomial} we can couple the $b$'s and $\widetilde{b}$'s so
that $(b_2,\ldots,b_k) \leq
(\widetilde{b}_2,\ldots,\widetilde{b}_k)$ whenever $\sum_{j=2}^k
\widetilde{b}_j \geq  \Delta$.  It follows that
\begin{align*}
x_{n+1} &=E X^{+}(n+1)\\
& \leq E1_{\{\sum_{j=2}^k \widetilde{b}_j <\Delta\}} + \int_0^1
E\prod_{i=2}^{k} (1 -(1-s)(1-x_n)^{\widetilde{b}_i}) ds\\
& \leq p + \int_0^1  \left(1 -(1-s)\exp(-x_n D)\right)^{k-1} ds\\
& \leq p + \int_0^1  \exp\left(-(1-s)(k-1)\exp(-x_n D)\right) ds\\
& = p + \frac{1 - \exp\left(-(k-1)\exp(-x_n D)\right) }{(k-1)\exp(-x_n D)}\\
\end{align*}
where $p=P(\hbox{Poisson}((k-1)D)<\Delta)$.  Now
$p=\exp(-\Omega(\frac{k}{\sqrt{\Delta}}))=o(k^{-1})$ and the
function
\[
g(y) = p + \frac{1 - \exp\left(-(k-1)\exp(-y D)\right)
}{(k-1)\exp(-y D)}
\]
is increasing in $y$ so the result follows by Lemma
\ref{l:sequenceDecay}.

\end{proof}

\begin{lemma}\label{l:sequenceDecay}
Let $y_0,y_1,\ldots$ be a sequence of positive real numbers such
that $y_0=1$ and $y_{n+1}=g(y_n)$ where $g(y_n)=p + \frac{1 -
\exp\left(-(k-1)\exp(-y_n D)\right) }{(k-1)\exp(-y_n D)}$, $D=\log k
+ \log \log k +\beta^*$, $\beta^*< 1-\log 2$ and $p=o(k^{-1})$. Then
for large enough $k$,
\[
\limsup_n x_n < \frac2{k}.
\]
\end{lemma}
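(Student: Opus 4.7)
The plan is to show that the deterministic sequence $(y_n)$, which starts at $y_0=1$, is monotone non-increasing and eventually enters the interval $[0,2/k]$, from which it cannot escape. Monotonicity of $g$ is clear: $F(a):=(1-e^{-a})/a$ is decreasing in $a>0$, $a(y):=(k-1)e^{-yD}$ is decreasing in $y$, so $g(y)=p+F(a(y))$ is increasing. Since $a(1)=(k-1)e^{-D}=\Theta(1/\log k)$ is small for large $k$, $g(1)=p+1-\tfrac{1}{2}a(1)+O(a(1)^2)<1$, and then $y_{n+1}=g(y_n)\le g(y_{n-1})=y_n$ by induction. The interval $[0,2/k]$ is $g$-invariant because $a(2/k)=(k-1)e^{-2D/k}=(k-1)(1+O(\log k/k))$, whence $g(2/k)\le p+1/(k-1)+o(1/k)<2/k$ for large $k$; together with monotonicity of $g$, this gives $g([0,2/k])\subset [0,2/k]$. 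So everything reduces to showing $y_n\le 2/k$ for some finite $n$.

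The crux is to analyze the escape from the region $y\approx 1$. Setting $\eps_n=1-y_n$ and using $F(a)=1-a/2+O(a^2)$ for small $a$, we obtain
\[
\eps_{n+1}=\frac{(k-1)e^{-D}}{2}\,e^{\eps_n D}\,\bigl(1+O(a(y_n))\bigr)-p,
\]
valid as long as $a(y_n)=O(1)$. Rescaling to $u_n=\eps_n D$ with $C:=1/(2e^{\beta^*})$, and using $D=\log k+\log\log k+\beta^*$, the recursion reduces to $u_{n+1}=Ce^{u_n}(1+o(1))$. The hypothesis $\beta^*<1-\log 2$ is equivalent to $C>1/e$, so
\[
Ce^u-u\ge 1+\log C=1-\log 2-\beta^*>0\quad\text{for all }u\ge 0.
\]
Hence $u\mapsto Ce^u$ has no fixed point; starting from $u_0=0$, each iterate satisfies $u_{n+1}-u_n\ge 1-\log 2-\beta^*-o(1)$, so within $O(\log\log k)$ steps $u_n$ crosses the threshold $\log\log k+O(1)$ at which the Taylor approximation breaks down.

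Beyond that threshold, a few explicit iterations drive $y_n$ below $2/k$. First, $a(y_n)\ge 1$ gives $F(a(y_n))\le 1-e^{-1}$, so $y_{n+1}\le p+1-e^{-1}\le 1-\delta$ for some fixed $\delta>0$. For any such $y_n$ one has $a(y_n)\ge (k-1)e^{-(1-\delta)D}=\Theta(k^\delta(\log k)^{\delta-1})\to\infty$, hence $y_{n+1}\le p+(1+o(1))/a(y_n)=O(k^{-\delta}\polylog k)=o(1)$. Finally, $y_n=o(1)$ forces $a(y_n)=(k-1)(1-o(1))$, giving $y_{n+1}\le 1/(k-1)+o(1/k)<2/k$. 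Combined with monotonicity of $(y_n)$ and invariance of $[0,2/k]$ under $g$, this yields $\limsup_n y_n<2/k$; in fact the limit is the smallest fixed point of $g$, which is asymptotic to $1/k$.

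The main obstacle is making the first-phase analysis rigorous: one must control both the multiplicative error $1+O(a(y_n))$ from the Taylor remainder and the additive correction $-p$ uniformly over the $O(\log\log k)$ steps while $u_n$ traverses $[0,\log\log k]$, and check that these corrections do not spoil the lower bound $u_{n+1}-u_n\ge 1-\log 2-\beta^*-o(1)$. The threshold $\beta^*=1-\log 2$ in Lemma~\ref{l:DecayBoundsNear1} is precisely the boundary $C=1/e$ at which the approximate map $u\mapsto Ce^u$ first acquires a fixed point; below it the sequence escapes as above, while at or above it $u_n$ would stall at some $u=\Theta(1)$, corresponding to $\eps_n$ trapped near $\Theta(1/\log k)$.
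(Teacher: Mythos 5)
Your proposal is correct and follows essentially the same two-phase strategy as the paper: linearize $F(a)=(1-e^{-a})/a$ near $a=0$ with slope $-\tfrac12$, use $\beta^*<1-\log 2$ (i.e.\ $\tfrac12 e^{-\beta^*}>e^{-1}$) to force the gap $1-y_n$ to grow until $(k-1)e^{-y_nD}$ reaches constant order, then finish with a constant number of explicit iterations that drive $y_n$ below $2/k$. The only substantive difference is the first-phase bookkeeping, and it is exactly the point you flag as your remaining obstacle: rather than tracking $u_n=D(1-y_n)$ with additive increments and a $1+o(1)$ multiplicative error that must be controlled uniformly over $O(\log\log k)$ steps (as written, your bound $u_{n+1}-u_n\ge 1-\log 2-\beta^*-o(1)$ degrades once $e^{u_n}$ is comparable to $\sqrt{\log k}$, since the error term is of size $O(e^{2u_n}/\log k)$), the paper fixes $\epsilon,\delta>0$ once with $\frac{1-e^{-x}}{x}\le 1-(\tfrac12-\epsilon)x$ on $(0,\delta)$ and constants $r'>r>0$ with $(\tfrac12-\epsilon)e^{-\beta^*}>e^{-1}(1+r')$, and proves the multiplicative bound $1-y_{i+1}\ge(1+r)(1-y_i)$, valid whenever $(k-1)e^{-y_iD}<\delta$, using $e^x\ge ex$ — the same minimization as your $Ce^u-u\ge 1+\log C$; this absorbs both the Taylor remainder and the $-p$ correction with fixed slack and no step counting, so your obstacle disappears if you keep a fixed fraction of slack in $C$ and only run the estimate while $a(y_n)\le\delta$, exiting to your second phase with threshold $a\ge\delta$ rather than $a\ge 1$. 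Your extra observations — monotonicity of the whole sequence $(y_n)$ and invariance of $[0,2/k]$ under the increasing map $g$ — do not appear in the paper's proof but actually make the passage from ``some $y_N<2/k$'' to $\limsup_n y_n<2/k$ cleaner than the paper leaves it.
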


\begin{proof}
Since $\left . \frac{d}{dx} \frac{1-e^{-x}}{x}\right |_{x=0}=
-\frac12$ we can find $\epsilon,\delta>0$ such that when
$0<x<\delta$, then
\[
\frac{1-e^{-x}}{x}<1-\left(\frac12-\epsilon\right)x.
\]
We also choose $r'>r>0$ such that $(\frac12-\epsilon)
e^{-\beta^*}>e^{-1}(1+r')$.   Now for large enough $k$,
$(k-1)\exp(-D)=\frac{(k-1)e^{-\beta^*}}{k \log k}<\delta$ and so
\[
y_1=g(1)\leq p+
1-\left(\frac12-\epsilon\right)\frac{(k-1)e^{-\beta^*}}{k \log
k}\leq 1- p+ \frac{(1+r)e^{-1}}{\log k}\leq 1-\frac{e^{-1}}{\log k}.
\]
Now since $g$ is a continuous increasing function and $y_1<y_0$ it
follows that the sequence $y_i$ is decreasing.  Suppose that
$(k-1)\exp(-y_i D) < \delta$.  Then
\[
y_{i+1} \leq p + 1 - \left(\frac12-\epsilon\right)(k-1)\exp(-y_i D)
\]
and so
\begin{align*}
1-y_{i+1} & \geq  \left(\frac12-\epsilon\right)(k-1)\exp(-y_i D) - p\\
& \geq \left(\frac12-\epsilon\right)\frac{(k-1)e^{-\beta^*} }{k \log k}\exp((1-y_i)\log k) - p\\
& \geq \frac{(1+r')e^{-1} }{\log k}\exp((1-y_i)\log k) - p\\
& \geq (1+r')(1-y_i) - p\\
& \geq (1+r)(1-y_i)
\end{align*}
where the second last inequality uses the fact that $e^x\geq e x$
and the final inequality uses the fact that $1-y_i\geq
\frac{e^{-1}}{\log k}$ while $p=o(k^{-1})$.  It follows that $y_i$
decreases until for some $i$, $(k-1)\exp(-y_i D) \geq \delta$.  Now
let $\frac{1 - e^{-\delta}}{\delta}=\alpha'<\alpha''<\alpha<1$ for
some $\alpha$.  When $k$ is large enough then
\[
y_{i+1} \leq p + \frac{1 - e^{-\delta}}{\delta} \leq \alpha''.
\]
Then again for $k$ large enough, $\exp(-y_{i+1} D) \geq
\exp(-\alpha''D)\geq \exp(-\alpha \log k)=k^{-\alpha}$.  It follows
that
\[
y_{i+2} \leq p + \frac1{(k-1)\exp(-y_{i+1} D)} \leq 2 k^{\alpha-1}.
\]
Finally we have $\exp(-y_{i+2} D) \geq \exp(-2k^{\alpha-1} D)\geq
\frac23$ and so
\[
y_{i+3}  \leq p + \frac1{(k-1)\exp(-y_{i+2} D)} < 2 k^{-1}
\]
when $k$ is large enough which completes the proof.

\end{proof}

In the preceding lemma we note that the requirement that
$\beta^*<1-\ln 2$ comes from the fact that $x<\frac12 e^{x-\beta^*}$
for all $x$ when $\beta^*<1-\ln 2$.

\begin{lemma}\label{l:multinomial}
Suppose that $(b_1,\ldots,b_k)$ has the multinominal distribution
$M(n,(\frac1k,\frac1k,\ldots\frac1k))$. Let $\widetilde{b}_j$ be iid
random variables distributed as Poisson$(D)$.  We can couple the
$b$'s and $\widetilde{b}$'s so that $(b_1,\ldots,b_k) \leq
(\widetilde{b}_1,\ldots,\widetilde{b}_k)$ (respectively $\geq$)
whenever $\sum_{j=1}^k \widetilde{b}_j \geq n$ (respectively
$\leq$).
\end{lemma}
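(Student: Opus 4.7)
The plan is to realise both vectors on a single probability space via the standard Poisson--multinomial coupling. I would take iid random variables $X_1,X_2,\ldots$ uniform on $\{1,\ldots,k\}$, and independently $N \sim \mathrm{Poisson}(kD)$. Then define
\[
b_j = \#\{1 \le i \le n : X_i = j\}, \qquad \widetilde{b}_j = \#\{1 \le i \le N : X_i = j\}.
\]
By construction $(b_1,\ldots,b_k)$ has the multinomial distribution $M(n,(1/k,\ldots,1/k))$. For the Poisson side, I invoke the classical thinning/colouring identity: if a $\mathrm{Poisson}(kD)$ total number of points is labelled independently and uniformly with one of $k$ colours, the per-colour counts are iid $\mathrm{Poisson}(D)$. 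This gives the required marginal for $(\widetilde{b}_1,\ldots,\widetilde{b}_k)$.

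The comparison is then immediate on this coupling. On the event $\{N \ge n\}$ we have $\sum_j \widetilde{b}_j = N \ge n$, and since $\{1,\ldots,n\} \subseteq \{1,\ldots,N\}$ it follows that $b_j \le \widetilde{b}_j$ coordinatewise for every $j$. Symmetrically, on $\{N \le n\}$ the reverse containment $\{1,\ldots,N\} \subseteq \{1,\ldots,n\}$ yields $b_j \ge \widetilde{b}_j$ coordinatewise. This delivers exactly the two stochastic domination statements claimed in the lemma.

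There is essentially no obstacle to overcome here; the only nontrivial ingredient is the Poisson thinning identity, which is standard, so in the actual write-up I would just record the coupling construction and the thinning step in a line or two before reading off the conclusion.
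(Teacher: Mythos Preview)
Your proposal is correct and is essentially the same argument as the paper's, just made fully explicit: the paper observes that iid Poisson variables conditioned on their sum $N$ are multinomial $M(N,(1/k,\ldots,1/k))$ and then says two uniform multinomials with sample sizes $n\le m$ ``can be trivially coupled'' so that one dominates the other, while your construction via a single iid uniform label sequence $X_1,X_2,\ldots$ is precisely that trivial coupling written out. Nothing is missing.
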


\begin{proof}
Since the $\widetilde{b}_j$ are independent and Poisson, conditional
on the sum $N=\sum_{j=1}^k \widetilde{b}_j$, the distribution of
$(\widetilde{b}_1,\ldots,\widetilde{b}_k)$ is multinominal
$M(N,(\frac1k,\frac1k,\ldots\frac1k))$ (see \cite{Lange:03}
Proposition 6.2.1). Now if $n\leq m$ then two multinomial
distributions $A$ and $B$ distributed as
$M(n,(\frac1k,\frac1k,\ldots\frac1k))$ and
$M(m,(\frac1k,\frac1k,\ldots\frac1k))$ respectively can be trivially
coupled so that $A\leq B$ which completes the proof.
\end{proof}

Janson and Mossel \cite{JanMos:04} studied ``robust
reconstruction'', the question of when reconstruction is possible
from a very noisy copy of the leaves.  They found that the threshold
for robust reconstruction is exactly the Kesten-Stigam bound. Lemma
\ref{l:DecayBoundsNear1} establishes that the leaves provide very
little information about the spin at a vertex a long distance from
the leaves. So as information over long distances is very noisy the
results of \cite{JanMos:04} suggest that reconstruction would only
be possible after the Kesten-Stigam bound whereas, in our context,
$\Delta$ is much less than $\lambda_2(M)^{-2}$.  As such only crude
bounds are needed to establish the following lemma.

\begin{lemma}\label{l:contraction}
For sufficiently large $k$ if $\Delta \leq 2k\log k$ and if $x_n
\leq \frac2k$ then
\[
x_{n+1}-\frac1k \leq \frac12\left(x_n-\frac1k\right).
\]
\end{lemma}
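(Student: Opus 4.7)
My plan exploits that $\Delta\leq 2k\log k$ places us far below the Kesten--Stigam threshold $(k-1)^2$, so the recursion near the fixed point $1/k$ is strongly contractive with true rate $\Delta/(k-1)^2 = O(\log k/k)\ll 1/2$. Setting $\epsilon_n = x_n - 1/k$, Lemma~\ref{l:changeOfMeasure} together with the symmetry $EX_i(n) = 1/k$ under the free measure gives $\epsilon_n = k\,\mathrm{Var}(X_1(n))$, so the claim is equivalent to $\mathrm{Var}(X_1(n+1))\leq \tfrac{1}{2}\mathrm{Var}(X_1(n))$.

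The strategy is to linearize the recursion around $Y_{ij}=1/k$. Writing $\eta_{ij}=Y_{ij}-1/k$, so $\sum_i\eta_{ij}=0$, and pulling $((k-1)/k)^\Delta$ out of each $Z_i=\prod_j(1-Y_{ij})$, the recursion becomes $X_i(n+1)=\tilde Z_i/\sum_j\tilde Z_j$ with $\tilde Z_i=\prod_j(1 - k\eta_{ij}/(k-1))$. The cancellation $\sum_i \eta_{ij}=0$ implies $\sum_i \tilde Z_i = k + O(\eta^2)$, so to leading order $X_i(n+1)-1/k \approx -(k-1)^{-1}\sum_j\eta_{ij}$. Using subtree independence conditional on $\sigma_\rho$ and a short computation of $\mathrm{Cov}(\eta_{1j},\eta_{1j'})$ (by conditioning on $\sigma_\rho$ and then on $\sigma_{u_j}$), I expect $\mathrm{Var}(X_1(n+1)) \approx \Delta\epsilon_n/(k(k-1)^2)$ plus a correction of order $\Delta^2\epsilon_n^2/((k-1)^2 k^3)$. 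For $\Delta\leq 2k\log k$ and $\epsilon_n\leq 1/k$ both quantities are $\ll \tfrac{1}{2}\mathrm{Var}(X_1(n)) = \epsilon_n/(2k)$.

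The higher-order Taylor corrections from $\prod_j(1 - k\eta_{ij}/(k-1))$ are handled by truncation: on the event $\{\max_{i,j}|\eta_{ij}|\leq \tau\}$ for a fixed small constant $\tau$, the expansion converges and the quadratic-and-above contributions are bounded by $\mathrm{Var}(\eta_{ij}) = \epsilon_n/k$ together with combinatorial factors, yielding error terms of order $\Delta\epsilon_n^2/k$ and $\Delta^2\epsilon_n^2/k^3$; both are absorbed into the main linear bound. On the complementary event, which has probability at most $O(\Delta\epsilon_n/(k\tau^2))$ by union bound and Chebyshev, the trivial bound $0\leq X^+(n+1)\leq 1$ is more than adequate. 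The main obstacle is avoiding any $\epsilon_n$-independent slack, since $X^+(n+1) = 1/k$ exactly when $\epsilon_n=0$; this is why I do not use the bound $E[X^+(n+1)\mid \mathcal{F},\{U_i\}]\leq 1/(1+\sum_{i\geq 2}U_i)$ from the proof of Lemma~\ref{l:DecayBoundsNear1} (which retains irreducible slack of order $\log k/k^2$ even at the fixed point), but instead work directly with the linearization in $\eta_{ij}$ so that the contraction is uniform over $\epsilon_n\in[0,1/k]$.
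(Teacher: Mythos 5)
Your leading-order analysis is sound and agrees with the paper: the identity $x_{n+1}-\frac1k = k\,E(X_1(n+1)-\frac1k)^2$ from Lemma \ref{l:changeOfMeasure}, the cancellation $\sum_i\eta_{ij}=0$, and the resulting contraction rate of order $\Delta/k^2$ are exactly what the paper's proof delivers (it ends with $x_{n+1}-\frac1k\leq \frac{13\Delta}{k^2}(x_n-\frac1k)$). The gap is in your treatment of the nonlinear remainder. Your truncation event is controlled only by Chebyshev with the second moments that $\epsilon_n=x_n-\frac1k$ provides: each $\eta_{ij}$ has variance $\epsilon_n/k$, and a union bound over all $k\Delta$ variables (you need all colours, since the denominator $\sum_i\tilde Z_i$ involves every $i$) gives $P(\max_{i,j}|\eta_{ij}|>\tau)\leq \Delta\epsilon_n/\tau^2$; even your more optimistic figure $O(\Delta\epsilon_n/(k\tau^2))=O((\log k/\tau^2)\,\epsilon_n)$ is fatal. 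Paired with the trivial bound $0\leq X^+(n+1)\leq 1$, the bad event alone contributes order $(\log k)\,\epsilon_n$ to $x_{n+1}-\frac1k$ (and order $(\log k)\,\epsilon_n$ versus the required $\epsilon_n/(2k)$ in your variance formulation), which swamps the claimed contraction $\frac12\epsilon_n$ rather than being ``more than adequate.'' Nor can you sharpen the tail: $\epsilon_n$ only controls second moments of $X_i(n)-\frac1k$ (higher moments give nothing beyond $|X_i-\frac1k|\leq 1$), so no Chebyshev-type truncation at a fixed $\tau$ can make the bad event $o(\epsilon_n/k)$, or even $o(\epsilon_n)$.

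The paper avoids truncation entirely, and this is the idea you are missing: it writes $x_{n+1}-\frac1k=E\frac{Z_1-\frac1k\sum_iZ_i}{\sum_iZ_i}$ and uses the exact algebraic identity $\frac1{s+r}=\frac1s-\frac{r}{s^2}+\frac{r^2}{s^2}\cdot\frac1{s+r}$ with $s=E\sum_iZ_i$ and $r=\sum_i(Z_i-EZ_i)$. The remainder term then carries the random denominator with it and is bounded \emph{pointwise} using $\bigl|Z_1-\frac1k\sum_iZ_i\bigr|/\sum_iZ_i\leq 1$, so its expectation is at most $\var\bigl(\sum_iZ_i\bigr)/\bigl(E\sum_iZ_i\bigr)^2$. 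All three resulting terms are then controlled by the mean and variance estimates of Lemma \ref{l:ZEstimates} (which, via the appendix identities and $z_n\leq x_n-\frac1k$, are proportional to $x_n-\frac1k$, so there is no $\epsilon_n$-independent slack — the concern that motivated your design is already handled). If you want to salvage your linearization route, you should replace the truncation-plus-Chebyshev step by this kind of self-normalizing expansion of $1/\sum_iZ_i$ about its mean; as written, the error control does not close.
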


\begin{proof}
Using the identity
\[
\frac1{s+r}=\frac1s - \frac{r}{s^2} + \frac{r^2}{s^2}\frac1{s+r}
\]
and taking $s=E \sum_{i=1}^k Z_i$ and $r=\sum_{i=1}^k (Z_i-E Z_i)$
we have that
\begin{align*}
x_{n+1}-\frac1k &= E \frac{Z_1-\frac1k\sum_{i=1}^k Z_i}{\sum_{i=1}^k
Z_i}\\
&= E \frac{Z_1-\frac1k\sum_{i=1}^k Z_i}{E \sum_{i=1}^k Z_i} -E
\frac{\left(Z_1-\frac1k\sum_{i=1}^k Z_i\right)\left(\sum_{i=1}^k
(Z_i-E Z_i)\right)}{\left(E \sum_{i=1}^k Z_i\right)^2}\\
& \quad + \frac{Z_1-\frac1k\sum_{i=1}^k Z_i}{\sum_{i=1}^k
Z_i}\frac{\left(\sum_{i=1}^k (Z_i-E Z_i)\right)^2}{\left(E
\sum_{i=1}^k Z_i\right)^2}.
\end{align*}
Now by Lemma \ref{l:ZEstimates},
\begin{align}\label{e:contractionA}
\frac{E\left(Z_1-\frac1k\sum_{i=1}^k Z_i\right)}{E \sum_{i=1}^k Z_i}
& \leq
\frac{\frac{k-1}{k}\left(1+\frac{2\Delta}k\left(x_n-\frac1k\right)\right)
-\frac{k-1}{k}\left(1-\frac{2\Delta}{k^2}\left(x_n-\frac1k\right)\right)}
{1+(k-1)\left(1-\frac{2\Delta}{k^2}\left(x_n-\frac1k\right)\right)}\nonumber \\
& \leq \frac{3\Delta}{k^2} \left(x_n-\frac1k\right).
\end{align}
Using the inequality $\frac12(a^2+b^2) \geq ab$ we have that
\begin{align*}
&-\left(Z_1-\frac1k\sum_{i=1}^k Z_i\right)\left(\sum_{i=1}^k Z_i-E
Z_i\right)\\
 &= -\left(\left(Z_1-E Z_1\right) + \left(E Z_1 -
\frac1k\sum_{i=1}^k E
Z_i\right) - \frac1k\left(\sum_{i=1}^k (Z_i - E Z_i)\right)\right)\\
&\quad \cdot\left(\sum_{i=1}^k (Z_i-E Z_i)\right)\\
& \leq \frac12\left|Z_1-E Z_1\right|^2 + \left(\frac12 +
\frac1k\right)\left|\sum_{i=1}^k (Z_i-E Z_i) \right|^2\nonumber\\
&\quad - \left(EZ_1 - E\frac1k\sum_{i=1}^k
Z_i\right)\left(\sum_{i=1}^k (Z_i-E Z_i)\right)
\end{align*}
so by Lemma \ref{l:ZEstimates} we have that,
\begin{eqnarray*}
E\left[-\left(Z_1-\frac1k\sum_{i=1}^k Z_i\right)\left(\sum_{i=1}^k
(Z_i-E Z_i)\right)\right]\\
\leq
\left(\frac{k-1}{k}\right)^{2\Delta}\left(x_n-\frac1k\right)
\left[\frac{4\Delta}k + 4\Delta \right]
\end{eqnarray*}
and
\begin{align}\label{e:contractionB}
&E \left[-\frac{\left(Z_1-\frac1k\sum_{i=1}^k
Z_i\right)\left(\sum_{i=1}^k (Z_i-E Z_i)\right)}{\left(E
\sum_{i=1}^k Z_i\right)^2}\right]\nonumber\\
 \leq &
\frac{\left(x_n-\frac1k\right) \left[\frac{4\Delta}k + 4\Delta
\right]}{\left(
1+(k-1)\left(1-\frac{2\Delta}{k^2}\left(x_n-\frac1k\right)\right)\right)^2}\nonumber\\
\leq & \frac{5\Delta}{k^2} \left(x_n-\frac1k\right).
\end{align}
Finally since $0\leq \frac{Z_1}{\sum_{i=1}^k Z_i}\leq 1$ we have
that $\left| \frac{Z_1 - \frac1k \sum_{i=1}^k Z_i}{\sum_{i=1}^k Z_i}
\right|\leq 1$ and so
\begin{align}\label{e:contractionC}
E \frac{Z_1-\frac1k\sum_{i=1}^k Z_i}{\sum_{i=1}^k
Z_i}\frac{\left(\sum_{i=1}^k (Z_i-E Z_i)\right)^2}{\left(E
\sum_{i=1}^k Z_i\right)^2} & \leq E \frac{\left(\sum_{i=1}^k (Z_i-E
Z_i)\right)^2}{\left(E \sum_{i=1}^k Z_i\right)^2}\nonumber \\
& \leq
\frac{5\Delta}{k^2} \left(x_n-\frac1k\right).
\end{align}
Combining equations \eqref{e:contractionA}, \eqref{e:contractionB}
and \eqref{e:contractionC} we have that
\begin{equation}\label{e:contractionKS}
x_{n+1}-\frac1k \leq \frac{13\Delta}{k^2}
\left(x_n-\frac1k\right)\leq \frac12 \left(x_n-\frac1k\right)
\end{equation}
and for large enough $k$, which completes the result.

\end{proof}

\begin{lemma}\label{l:ZEstimates}
For sufficiently large $k$ if $\Delta \leq 2k\log k$ and if $x_n
\leq \frac2k$ then the following all hold
\begin{equation}\label{e:Z1Mean}
\left(\frac{k-1}{k}\right)^\Delta \leq EZ_1  \leq
\left(\frac{k-1}{k}\right)^\Delta\left(1+\frac{2\Delta}k\left(x_n-\frac1k\right)\right)
\end{equation}
and for $i\neq 1$,
\begin{equation}\label{e:ZiMean}
\left(\frac{k-1}{k}\right)^\Delta\left(1-\frac{2\Delta}{k^2}\left(x_n-\frac1k\right)\right)
\leq EZ_i  \leq \left(\frac{k-1}{k}\right)^\Delta,
\end{equation}

\begin{equation}\label{e:Z1Var}
\var Z_1  \leq
\left(\frac{k-1}{k}\right)^{2\Delta}\frac{4\Delta}k\left(x_n-\frac1k\right),
\end{equation}

\begin{equation}\label{e:ZSumVar}
\var \left(\sum_{i=1}^k Z_i \right) \leq
\left(\frac{k-1}{k}\right)^{2\Delta}
4\Delta\left(x_n-\frac1k\right).
\end{equation}

\end{lemma}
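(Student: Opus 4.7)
The proof will be a direct computation that leverages two structural facts. First, conditional on $\sigma_\rho=1$, the subtrees $T_j$ are independent, so the random vectors $(Y_{1j},\ldots,Y_{kj})$ are i.i.d.\ across $j$ and the products defining the $Z_i$ factorise under expectation. Second, conditional on $\sigma_{u_j}=m$ (with $m$ uniform on $\{2,\ldots,k\}$), $Y_{mj}\stackrel{d}{=}X^{+}(n)$ and $Y_{ij}\stackrel{d}{=}X^{-}(n)$ for $i\neq m$. Combined with $EX^{+}=x_n$ and the normalisation $x_n+(k-1)EX^{-}=1$ (from $\sum_i f_n(i,L)=1$), this pins down the marginals $EY_{1j}=(1-x_n)/(k-1)$ and $EY_{ij}=(x_n+k-2)/(k-1)^2$ for $i\neq 1$.

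For the mean bounds, independence across $j$ yields the closed forms
\[
EZ_1=\Bigl(\tfrac{k-1}{k}\Bigr)^\Delta\Bigl(1+\tfrac{k(x_n-1/k)}{(k-1)^2}\Bigr)^\Delta, \qquad EZ_i=\Bigl(\tfrac{k-1}{k}\Bigr)^\Delta\Bigl(1-\tfrac{k(x_n-1/k)}{(k-1)^3}\Bigr)^\Delta\ (i\neq 1).
\]
The lower bound in \eqref{e:Z1Mean} and upper bound in \eqref{e:ZiMean} are immediate from $x_n\geq 1/k$ (Corollary \ref{c:changeOfMeasure}). The remaining two bounds use the elementary estimates $(1+u)^\Delta\leq 1+(1+o(1))\Delta u$ and $(1-u)^\Delta\geq 1-\Delta u$; the first is valid because $\Delta\leq 2k\log k$ and $x_n-1/k\leq 1/k$ make the relevant $\Delta u$ equal to $o(1)$, which also lets us absorb the mild discrepancy between $k/(k-1)^2$ and $1/k$ into the factor $2$ in the target once $k$ is large enough.

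The variance of $Z_1$ again factorises:
\[
\var Z_1=(1-EY_{1j})^{2\Delta}\Bigl[\Bigl(1+\tfrac{\var Y_{1j}}{(1-EY_{1j})^2}\Bigr)^\Delta-1\Bigr],
\]
reducing the task to estimating $\var Y_{1j}=\var X^{-}$. Applying the change-of-measure device of Lemma \ref{l:changeOfMeasure} gives $E(X^{-})^2=kE[X_1^2X_2]$, and by exchangeability of $X_2,\ldots,X_k$ under the free measure $(k-1)E[X_1^2X_2]=EX_1^2-EX_1^3=(x_n-E(X^{+})^2)/k$. Using $E(X^{+})^2\geq x_n^2$ and simplifying,
\[
\var Y_{1j}\leq \tfrac{k(1-x_n)(x_n-1/k)}{(k-1)^2}\leq (1+o(1))\tfrac{x_n-1/k}{k}.
\]
Plugging into the display above and using $(1+a)^\Delta-1\leq 2\Delta a$ for small $\Delta a$ (as here) yields \eqref{e:Z1Var}.

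For \eqref{e:ZSumVar} I will expand $\var(\sum_i Z_i)=\sum_i\var Z_i+2\sum_{i<i'}\mathrm{Cov}(Z_i,Z_{i'})$. A parallel law-of-total-variance computation for $i\neq 1$ (using the bound on $\var X^{-}$ above together with the by-product $z_n\leq k(1-x_n)(x_n-1/k)/(k-1)$ coming from $\var X^{-}\geq 0$) yields $\var Y_{ij}\lesssim (x_n-1/k)/k$ for all $i$, so $\sum_i\var Z_i$ already has the target order $\Delta(x_n-1/k)\cdot((k-1)/k)^{2\Delta}$. The cross covariances factorise analogously, $\mathrm{Cov}(Z_i,Z_{i'})=\prod_jE[(1-Y_{ij})(1-Y_{i'j})]-\prod_j(1-EY_{ij})(1-EY_{i'j})$, so the off-diagonal contribution reduces to controlling $\mathrm{Cov}(Y_{ij},Y_{i'j})$. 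I anticipate this to be the main obstacle: the simplex constraint $\sum_iY_{ij}=1$ forces $\sum_{i'\neq i}\mathrm{Cov}(Y_{ij},Y_{i'j})=-\var Y_{ij}\leq 0$, so to leading order the sum of off-diagonal covariances of the $Z_i$'s is nonpositive, and the higher-order corrections are easily $o(\Delta(x_n-1/k))\cdot((k-1)/k)^{2\Delta}$ since $\Delta\leq 2k\log k$. Combining the diagonal and off-diagonal estimates then gives \eqref{e:ZSumVar} with constant $4$.
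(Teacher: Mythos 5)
Your treatment of the means, of $\var Z_1$, and of the diagonal terms $\sum_i\var Z_i$ is correct and is essentially the paper's own route (exact marginals of the $Y_{ij}$ as mixtures of $X^+$ and $X^-$, factorisation over the conditionally independent subtrees, then first-order exponential estimates, as in Lemmas \ref{l:AppendixY} and \ref{l:AppendixZ}). The genuine gap is exactly at the point you flag: the cross covariances in \eqref{e:ZSumVar}. The aggregate identity $\sum_{i'\neq i}\mathrm{Cov}(Y_{ij},Y_{i'j})=-\var Y_{ij}$ is not enough, because the map $c\mapsto(\mu_i\mu_{i'}+c)^{\Delta}-(\mu_i\mu_{i'})^{\Delta}$ (with $\mu_i=1-EY_{ij}$ and $c=\mathrm{Cov}(Y_{ij},Y_{i'j})$) is nonlinear and the base points $\mu_i\mu_{i'}$ are not all equal. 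Concretely, the linear term is $\Delta\sum_{i<i'}(\mu_i\mu_{i'})^{\Delta-1}\mathrm{Cov}(Y_{ij},Y_{i'j})$; since the weights $(\mu_i\mu_{i'})^{\Delta-1}$ vary across pairs by relative factors $1+O(\Delta(x_n-\frac1k)/k)=1+O(\frac{\log k}{k})$, its sign is controlled only up to an error of order $\frac{\log k}{k}\,\Delta\,(\frac{k-1}{k})^{2\Delta}\sum_{i<i'}|\mathrm{Cov}(Y_{ij},Y_{i'j})|$, and without sign information your only handle on that sum is Cauchy--Schwarz over the $\binom{k}{2}$ pairs, giving $O(k(x_n-\frac1k))$; the error is then of order $\log k\cdot\Delta(x_n-\frac1k)(\frac{k-1}{k})^{2\Delta}$, a factor $\log k$ over the budget in \eqref{e:ZSumVar}. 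The quadratic (convexity) correction has the same defect: it is bounded by $\Delta^2\sum_{i<i'}\mathrm{Cov}(Y_{ij},Y_{i'j})^2\cdot O((\frac{k-1}{k})^{2\Delta})\lesssim\Delta^2(x_n-\frac1k)^2(\frac{k-1}{k})^{2\Delta}$, and $\Delta(x_n-\frac1k)$ may be as large as $2\log k$ under your hypotheses. So the assertion that the corrections are ``easily $o(\Delta(x_n-1/k))$'' is not justified as written.

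What closes the gap is per-pair control, and this is where the paper does its real work: equation \eqref{e:AppendixY5} shows $\mathrm{Cov}(Y_{i_1j},Y_{i_2j})\leq0$ for every pair. For pairs containing colour $1$ this follows from exchangeability of $\{Y_{ij}\}_{i\geq2}$, since $EY_{1j}Y_{ij}=\frac1{k-1}EY_{1j}(1-Y_{1j})\leq\frac1{k-1}EY_{1j}E(1-Y_{1j})=EY_{1j}EY_{ij}$; for $2\leq i_1<i_2$ exchangeability makes all these covariances equal, with common value $\bigl[\var(1-Y_{1j})-\sum_{i\geq2}\var Y_{ij}\bigr]/[(k-1)(k-2)]$, which is $\leq0$ because $\var(1-Y_{1j})=\var X^-$ while $\sum_{i\geq2}\var Y_{ij}\geq(k-2)\var X^-+\var X^+$ by the conditional-variance half of your own law-of-total-variance computation. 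Granting this, $E\prod_j(1-Y_{ij})(1-Y_{i'j})\leq\prod_jE(1-Y_{ij})E(1-Y_{i'j})$, i.e.\ $\mathrm{Cov}(Z_i,Z_{i'})\leq0$ for every pair (equation \eqref{e:AppendixZ5}), and \eqref{e:ZSumVar} follows from the diagonal terms alone with no expansion at all. Alternatively you could salvage your expansion by first using exchangeability to note that the $Y$-covariances take only two values: $-\var Y_{1j}/(k-1)\leq0$ for pairs containing colour $1$, and a common value of magnitude $O((x_n-\frac1k)/k^2)$ otherwise; this upgrades $\sum_{i<i'}|\mathrm{Cov}(Y_{ij},Y_{i'j})|$ to $O(x_n-\frac1k)$ and makes both error terms genuinely negligible. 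A small side remark: what $\var X^-\geq0$ actually yields is $z_n\leq(x_n-\frac1k)-\frac{(x_n-1/k)^2}{k-1}$, not $\frac{k(1-x_n)(x_n-1/k)}{k-1}$; this is harmless, since only $z_n\leq x_n-\frac1k$ (Lemma \ref{l:changeOfMeasure}) is used there.
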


\begin{proof}

From equation \eqref{e:AppendixZ1} we have that
\[
E Z_1= \left(\frac{k-1}k +
\frac1{k-1}\left(x_n-\frac1k\right)\right)^\Delta
\]
and so since $x_n\geq \frac1k$ by Corollary \ref{c:changeOfMeasure}
we have that
\[
E Z_1 \geq \left(\frac{k-1}k \right)^\Delta.
\]
Then since $\exp(x)=1+x + O(x^2)$ and
$\frac{k\Delta}{(k-1)^2}\left(x_n-\frac1k\right)$ is small for large
$k$,
\begin{align*}
E Z_1 &\leq  \left(\frac{k-1}k \right)^\Delta
\exp\left(\frac{k\Delta}{(k-1)^2}\left(x_n-\frac1k\right)\right)\\
&\leq
\left(\frac{k-1}{k}\right)^\Delta\left(1+\frac{2\Delta}k\left(x_n-\frac1k\right)\right)
\end{align*}
which establishes equation \eqref{e:Z1Mean}.  Equations
\eqref{e:ZiMean},\eqref{e:Z1Var} and \eqref{e:ZSumVar} are
established similarly.

\end{proof}



\subsection{Reconstruction}
An upper bound on the number of colours needed for reconstruction is found by estimating the probability that the colour of the root is
uniquely determined by the colours at the leaves.  This method was described in \cite{MoPe:03} and used to a higher level of precision in \cite{Semerjian:08}.  We restate the result and give a full proof for completeness.
\begin{lemma}\label{l:fixedRoot}
Suppose that $\beta>1$.  Then for sufficiently large $k$ if $\Delta
> k[\log k + \log \log k +\beta]$ then the colour of the root is
uniquely determined by the colours at the leaves with probability at
least $1- \frac1{\log k}$, that is
\[
\inf_n P(X^+(n) = 1) > 1 - \frac1{\log k}.
\]

\end{lemma}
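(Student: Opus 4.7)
The plan is to let $q_n := P(X^+(n) = 1)$ denote the probability that the leaves at depth $n$ force the colour of the root, conditional on $\sigma_\rho = 1$. I will prove by induction on $n$ that $q_n \geq 1 - 1/\log k$ for all $n$, which is exactly the stated conclusion. The base case is trivial: $L(0) = \sigma_\rho$, so $q_0 = 1$.

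The heart of the argument is a one-step recursion. Condition on $\sigma_\rho = 1$; then the children's colours $\sigma_{u_1},\ldots,\sigma_{u_\Delta}$ are i.i.d.\ uniform on $\{2,\ldots,k\}$, and the subtree configurations $L_j(n)$ are conditionally independent. The leaves pin the root to colour $1$ if and only if every alternative colour $i \in \{2,\ldots,k\}$ is incompatible with the observed leaves, which in turn happens iff some child $u_j$ is forced by its own subtree to take colour exactly $i$ (since the only edge-constraint along $(\rho,u_j)$ is $\sigma_\rho \neq \sigma_{u_j}$). For fixed $i \neq 1$, the probability that a given child is forced to $i$ equals $P(\sigma_{u_j}=i)\cdot q_n = q_n/(k-1)$, and these events are independent across $j$. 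A union bound over the $k-1$ choices of $i$ gives
\[
1 - q_{n+1} \;\leq\; (k-1)\left(1 - \frac{q_n}{k-1}\right)^\Delta \;\leq\; (k-1)\exp\!\left(-\frac{q_n\Delta}{k-1}\right).
\]

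To close the induction, assume $q_n \geq q^* := 1 - 1/\log k$. It suffices to check that the right-hand side of the display is at most $1/\log k$, i.e.\ that $\Delta \geq (k-1)[\log(k-1)+\log\log k]/q^*$. Expanding $1/q^* = 1 + 1/(\log k - 1) = 1 + o(1)$ and $\log(k-1) = \log k + O(1/k)$, this threshold equals $k[\log k + \log\log k + 1 + o(1)]$. The hypothesis $\Delta > k[\log k + \log\log k + \beta]$ with $\beta > 1$ strictly dominates this for all sufficiently large $k$, so $q_{n+1} \geq q^*$ and the induction closes. The only real content of the argument is the combinatorial identification used to set up the recursion—namely, that a colour is unavailable at the root precisely when some child's subtree pins that child to that colour—and this is immediate from the tree structure and the fact that the constraint at each edge is just inequality of endpoints. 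Everything else is a routine Bernoulli count together with the standard $1-x\leq e^{-x}$ bound, so no serious obstacle is anticipated.
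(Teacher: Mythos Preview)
Your proof is correct. The recursion is set up the same way as in the paper---the root is pinned iff every non-root colour is witnessed among the children as a pinned value---but you then bound $1-q_{n+1}$ by a direct union bound over the $k-1$ bad events ``colour $i$ is not witnessed,'' using only the marginal probability $q_n/(k-1)$ that a given child is pinned to a given colour and independence across children. The paper instead conditions on the multiset of children's colours, invokes the multinomial--Poisson coupling of Lemma~\ref{l:multinomial} to replace the counts $b_i$ by i.i.d.\ Poisson$(D)$ variables with $D=\log k+\log\log k+\beta^*$, and then takes a product over colours to obtain $p_{n+1}\geq (1-e^{-p_n D})^{k-1}-s$. Your route is more elementary: it avoids the Poissonisation lemma entirely and needs no auxiliary error term $s$. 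The paper's product bound is nominally sharper than your union bound, but at the level of the threshold both yield the identical requirement $\Delta \geq k[\log k+\log\log k+1+o(1)]$, so nothing is lost. Your asymptotic expansion of $(k-1)[\log(k-1)+\log\log k]/q^*$ is accurate and the induction closes exactly when $\beta>1$, matching the paper's conclusion.
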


\begin{proof}
Let $p_n$ be the probability that the leaves at distance $n$
determine the spin at the root, that is $p_n=P(X^+(n)=1)$. We will
show that when $k$ is large then $\liminf_n p_n$ is close to 1.

Suppose we fix the colour of the root to be 1 let $\mathcal{F}$
denote the sigma-algebra generated by $\{\sigma_{u_j}:1\leq j \leq
\Delta\}$ the colours of the the neighbours of the root.  For $2\leq
i \leq k$ let $b_i=\#\{j:\sigma_{u_j}=i\}$, the number of times each
colour appears in the neighbours of the root.  Now each colour
$\sigma_{u_j}$ is chosen uniformly from the set $\{2,\ldots,k\}$ so
$(b_2,\ldots,b_k)$ has a multinominal distribution. Let
$\beta>\beta^*> 1$ and let $\widetilde{b}_i$ be iid random variables
distributed as Poisson$(D)$ where $D=\log k + \log \log k +\beta^*$.
By Lemma \ref{l:multinomial} we can couple the $b$'s and
$\widetilde{b}$'s so that $(b_2,\ldots,b_k) \geq
(\widetilde{b}_2,\ldots,\widetilde{b}_k)$ whenever $\sum_{i=2}^k
\widetilde{b}_j \leq \sum_{i=2}^k b_j = \Delta$.  If for each colour
$2\leq i \leq k$ there is some vertex $u_j$ such that the leaves in
$L^1_j(n)$ fix the colour of $u_j$ to be $i$ then the leaves
$L^1(n+1)$ fix the colour of $\rho$ to be 1. Conditional on
$\mathcal{F}$ the probability that there is such a vertex $u_j$ for
a given colour $i$ is at least $1-(1-p_n)^{b_i}$. Moreover these are
conditionally independent of $\mathcal{F}$ so it follows that
\begin{align*}
p_{n+1} &\geq \prod_{i=2}^k E\left[ 1 -
(1-p_n)^{b_i}|\mathcal{F}\right]\\
& \geq \prod_{i=2}^k E\left[ 1 - (1-p_n)^{\widetilde{b_i}}\right] -
s\\
& = (1-\exp(- p_n D))^{k-1} -
s\\
\end{align*}
where $s=P(\hbox{Poisson}((k-1)D)>\Delta)=o(k^{-1})$.  Now
$f(x)=(1-\exp(- p_n D))^{k-1} - s$ is increasing in $x$ and when $k$
is large enough
\begin{align*}
f\left(1-\frac1{\log k}\right) &= \left(1-\exp\left(- (1-\frac1{\log
k}) (\log k + \log \log k +\beta^*)\right)\right)^{k-1} - s\\
& > 1-\frac1{\log k}
\end{align*}
and since $p_0=1$,
\[
\inf_n p_n \geq 1-\frac1{\log k}
\]
which completes the proof.

\end{proof}

\subsection{Main Theorem}
\begin{proof}(Theorem \ref{t:main})
Combining Lemmas \ref{l:DecayBoundsNear1} and \ref{l:contraction}
establishes non-reconstruction when $\Delta \leq k[\log k + \log
\log k + 1 - \ln(2) -o(1)]$.  Lemma \ref{l:fixedRoot} shows that the
root can be reconstructed correctly with probability at least
$1-\frac1{\log k}$ which establishes reconstruction when $\Delta
\geq k[\log k + \log \log k + 1 -o(1)]$.
\end{proof}

\subsection*{Remarks}
For large $k$ the Poisson$(\Delta)$ distribution is concentrated
around $\Delta$ with standard deviation $O(\sqrt{\Delta})$ which is
significantly smaller than the errors bounds in Theorem
\ref{t:main}.  With some minor modifications the bounds for
$\Delta$-ary trees can be extended to Galton-Watson branching
processes with offspring distribution Poisson$(\Delta)$.  The
reconstruction of Galton-Watson branching processes with offspring
distribution Poisson$(\Delta)$ is of interest because, as noted
before, it is believed to be related to the clustering transitions
for colourings on Erd\H{o}s-R\'enyi random graphs.

To be more specific for the proof of non-reconstruction we can again
bound $x_n = EX^+$ where the expected value is taken over all
possible trees. In Lemma \ref{l:DecayBoundsNear1} we repeat the same
bounds on $x_n$, the only difference being $\Delta$ is now random,
which does not affect the results for large $k$.  Then similar
estimates can be made in Lemma \ref{l:contraction} provided
$\frac{\Delta}{k} \left(x_n-\frac1k\right)$ is very small.  As
$\Delta$ is concentrated around it's expected value the probability
of this not holding is very small and this can be used to complete
the proof of non-reconstruction.

When $\beta>\beta^*>1$, with probability going to 1 as $k$ goes to
infinity, the Galton-Watson branching process contains a subgraph
which is a $(k[\log k + \log \log k +\beta^*])$-ary tree rooted at
$\rho$. Reconstruction then follows from Lemma \ref{l:fixedRoot}.

\subsection*{Acknowledgments}
The author would like to thank Elchanan Mossel for his useful
comments and advice and thank Dror Weitz, Nayantara Bhatnagar, Lenka  Zdeborova, Florent Krz{\c a}ka{\l}a, Guilhem Semerjian and Dmitry Panchenko for useful discussions.

\appendix
\section{Appendix}

Observe that since $EX^+ + (k-1)EX^- =1$ we have that $E X^+
-\frac1k = -(k-1) (E X^- -\frac1k)$. We will show that the means and
variances of the $Y_{ij}$ and $Z_{i}$ can all be calculated in terms
of $x_n$ and $z_n$.

\begin{lemma}\label{l:AppendixY}
We have the identities
\begin{equation}\label{e:AppendixY1}
E Y_{1j} = \frac1k - \frac1{k-1}(x_n-\frac1k)
\end{equation}

\begin{equation}\label{e:AppendixY2}
E Y_{1j}^2 = \frac1{k^2} + \frac{k-2}{k(k-1)}
\left(x_n-\frac1k\right) - \frac1{k-1}z_n.
\end{equation}
For $2\leq i \leq k$,
\begin{equation}\label{e:AppendixY3}
E Y_{ij} = \frac1k + \frac1{(k-1)^2}\left(x_n-\frac1k\right)
\end{equation}
and
\begin{equation}\label{e:AppendixY4}
E Y_{ij}^2 = \frac1{k^2} + \frac{k^2 -2k+2}{k(k-1)^2}
\left(x_n-\frac1k\right) + \frac1{(k-1)^2}z_n.
\end{equation}
For any $1 \leq i_1 < i_2 \leq k$,
\begin{equation}\label{e:AppendixY5}
\hbox{Cov}(Y_{i_1 j},Y_{i_2 j})\leq 0
\end{equation}
\end{lemma}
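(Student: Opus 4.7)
The plan is to express every quantity via expectations of $X^+$ and $X^-$ (and hence $x_n, z_n$), and to obtain the covariance bound from the symmetry of the conditional model under permutations of $\{2,\ldots,k\}$. First I would identify the conditional distribution of $Y_{ij}$: under the measure conditioned on $\sigma_\rho = 1$, $\sigma_{u_j}$ is uniform on $\{2,\ldots,k\}$, and by the Markov property and colour symmetry, $f_n(i, L^c(n)) \stackrel{d}{=} X^+$ when $i = c$ and $\stackrel{d}{=} X^-$ when $i \neq c$. Consequently $Y_{1j} \stackrel{d}{=} X^-$ unconditionally, while for $i \neq 1$ the law of $Y_{ij}$ is the $\bigl(\tfrac{1}{k-1},\tfrac{k-2}{k-1}\bigr)$-mixture of the laws of $X^+$ and $X^-$.

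I would then derive the two balance identities $EX^+ + (k-1)EX^- = 1$ and $E(X^+)^2 + (k-1)E(X^-)^2 = x_n$ by summing the change-of-measure formula $Ef_n(1, L^c(n))^m = k E[X_1^m X_c]$ over $c$ and using $\sum_c X_c = 1$, and combine them with $E(X^+)^2 = z_n + \tfrac{2x_n}{k} - \tfrac{1}{k^2}$ to solve for $EX^-$ and $E(X^-)^2$. Substituting into the mixture representations of $Y_{ij}$ and $Y_{ij}^2$ and simplifying then produces \eqref{e:AppendixY1}--\eqref{e:AppendixY4}.

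For the covariance bound \eqref{e:AppendixY5}, the colour-permutation symmetry forces $\mathrm{Cov}(Y_{1j}, Y_{ij}) = C_1$ to be the same for all $i \neq 1$ and $\mathrm{Cov}(Y_{aj}, Y_{bj}) = C_2$ to be the same for all distinct $a, b \in \{2,\ldots,k\}$. Applying $1 - Y_{1j} = \sum_{i \neq 1} Y_{ij}$ to compute $\mathrm{Cov}(Y_{1j}, 1 - Y_{1j})$ and $\mathrm{Var}(1 - Y_{1j})$ gives
\[
(k-1)C_1 = -\mathrm{Var}(Y_{1j}), \qquad \mathrm{Var}(Y_{1j}) = (k-1)\mathrm{Var}(Y_{2j}) + (k-1)(k-2)C_2.
\]
The first makes $C_1 \leq 0$ immediate; $C_2 \leq 0$ reduces to $(k-1)\mathrm{Var}(Y_{2j}) \geq \mathrm{Var}(Y_{1j})$.

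The main obstacle is this last inequality. Plugging \eqref{e:AppendixY2} and \eqref{e:AppendixY4} into $(k-1)\mathrm{Var}(Y_{2j}) - \mathrm{Var}(Y_{1j})$ I expect the algebra to collapse to
\[
\frac{(k-3)(x_n - \tfrac{1}{k}) + 2 z_n}{k-1} + \frac{(k-2)(x_n - \tfrac{1}{k})^2}{(k-1)^3},
\]
which is manifestly $\geq 0$ for $k \geq 3$ using $x_n - \tfrac{1}{k} \geq 0$ from Corollary \ref{c:changeOfMeasure} and $z_n \geq 0$. For $k = 2$ the only admissible pair is $(1, 2)$, already handled by the $C_1$ identity.
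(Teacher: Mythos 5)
Your proposal is correct and follows essentially the same route as the paper: identify $Y_{1j}\stackrel{d}{=}X^-$ and, for $i\neq 1$, the $\bigl(\tfrac1{k-1},\tfrac{k-2}{k-1}\bigr)$-mixture of $X^+$ and $X^-$, use the change-of-measure identities $EX^++(k-1)EX^-=1$ and $E(X^+)^2+(k-1)E(X^-)^2=x_n$ to get \eqref{e:AppendixY1}--\eqref{e:AppendixY4}, and use $\sum_i Y_{ij}=1$ with colour exchangeability for the covariances. The only (harmless) divergence is the last step: where the paper bounds $\sum_{i\geq 2}\mathrm{Var}(Y_{ij})\geq \mathrm{Var}(X^+)+(k-2)\mathrm{Var}(X^-)$ via the mixture/total-variance inequality, you verify $(k-1)\mathrm{Var}(Y_{2j})\geq\mathrm{Var}(Y_{1j})$ by direct substitution of \eqref{e:AppendixY2} and \eqref{e:AppendixY4}, and your displayed expression and its nonnegativity (using $x_n\geq\tfrac1k$, $z_n\geq 0$) check out.
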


\begin{proof}
When the root is conditioned to be 1, $\sigma_j\neq 1$ and so
$Y_{1j}$ is distributed as $X^-$ and we have that,
\[
E Y_{1j} = EX^-=\frac1k - \frac1{k-1}[EX^+-\frac1k]=\frac1k -
\frac1{k-1}(x_n-\frac1k)
\]
and
\begin{align*}
E Y_{1j}^2 &= E (X^-)^2\\
&= \frac1{k-1}[E \sum_{i=1}^k (X_i)^2 - E (X^+)^2]\\
&=\frac1{k-1}[E X^+ - E (X^+)^2]\\
&= \frac1{k-1}[\frac{k-2}{k} E (X^+ -\frac1k) - E (X^+ - \frac1k)^2 + \frac{k-1}{k^2}]\\
&= \frac1{k^2} + \frac{k-2}{k(k-1)} \left(x_n-\frac1k\right) -
\frac1{k-1}z_n.
\end{align*}
where the third equality follows from Lemma \ref{l:changeOfMeasure}.
For $2\leq i \leq k$ we have that
\[
E Y_{ij} = \frac1{k-1}[1-EY_{1j}]=\frac1{k-1}[1-\frac1k +
\frac1{k-1}[x_n-\frac1k]]=\frac1k +
\frac1{(k-1)^2}\left(x_n-\frac1k\right)
\]
and again using Lemma \ref{l:changeOfMeasure}
\begin{align*}
E Y_{ij}^2 &= \frac1{k-1}[E \sum_{i=1}^k (X_i)^2 - E Y_{1j}^2]\\
&= \frac1{k^2} + \frac{k^2 -2k+2}{k(k-1)^2} \left(x_n-\frac1k\right)
+ \frac1{(k-1)^2}z_n
\end{align*}
Also for $2\leq i \leq k$,
\begin{align*}
EY_{1j}Y_{ij}&=\frac1{k-1}\sum_{i'=2}^k
EY_{1j}Y_{i'j}\\&=\frac1{k-1} EY_{1j}(1-Y_{1j})\\
&\leq \frac1{k-1} EY_{1j}E(1-Y_{1j})\\
&=EY_{1j}EY_{ij}
\end{align*}
so $\hbox{Cov}(Y_{1j},Y_{ij})\leq 0$.  Finally for $2 \leq i_1<
i_2\leq k$
\[
\hbox{Var}(1-Y_{1j})=\sum_{i=2}^k
\hbox{Var}(Y_{ij})+(k-1)(k-2)\hbox{Cov}(Y_{i_1 j},Y_{i_2 j})
\]
and so
\begin{align*}
\hbox{Cov}(Y_{i_1 j},Y_{i_2 j})&=\hbox{Var}(1-Y_{1j})-\sum_{i=2}^k
\hbox{Var}(Y_{ij})\\
&\leq \hbox{Var}(X^-)-((k-2)\hbox{Var}(X^-) + \hbox{Var}(X^+))\\
&\leq 0
\end{align*}
so $\hbox{Cov}(Y_{i_1 j},Y_{i_2 j})\leq 0$.

\end{proof}

Using Lemma \ref{l:AppendixY} we can calculate the means and
covariances of the $Z_j$.

\begin{lemma}\label{l:AppendixZ}
We have the following results
\begin{equation}\label{e:AppendixZ1}
E Z_1= (\frac{k-1}k + \frac1{k-1}\left(x_n-\frac1k\right))^\Delta
\end{equation}

\begin{equation}\label{e:AppendixZ2}
E Z_1^2= \left(\left(\frac{k-1}k\right)^2 +
\frac{3k-2}{k(k-1)}\left(x_n-\frac1k\right) - \frac1{k-1}z_n
\right)^\Delta
\end{equation}
For each $2\leq i \leq k$ then
\begin{equation}\label{e:AppendixZ3}
E Z_i = (\frac{k-1}k -
\frac1{(k-1)^2}\left(x_n-\frac1k\right)))^\Delta
\end{equation}
and
\begin{equation}\label{e:AppendixZ4}
E Z_i^2 = \left(\left(\frac{k-1}k\right)^2 +
\frac{k^2-4k+2}{k(k-1)^2}\left(x_n-\frac1k\right) +
\frac1{(k-1)^2}z_n\right)^\Delta
\end{equation}
For any $1 \leq i_1 < i_2 \leq k$,
\begin{equation}\label{e:AppendixZ5}
\hbox{Cov}(Z_{i_1 j},Z_{i_2 j})\leq 0
\end{equation}
\end{lemma}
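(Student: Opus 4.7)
The plan is to exploit the fact that, conditional on $\sigma_\rho=1$, the subtrees $T_1,\ldots,T_\Delta$ are i.i.d., so the random vectors $Y_j=(Y_{1j},\ldots,Y_{kj})$ are i.i.d.\ across $j$. In particular, for each fixed colour $i$ the factors $(1-Y_{ij})_{j=1}^\Delta$ making up $Z_i=\prod_{j=1}^\Delta(1-Y_{ij})$ are i.i.d., and for each fixed $j$ the pair $(Y_{i_1 j},Y_{i_2 j})$ has the joint law described in Lemma \ref{l:AppendixY}. Every statement in the lemma will be reduced to a $\Delta$-th power of a single-tree expectation already computed there.

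For the moment identities, independence across $j$ immediately gives
\[
EZ_i = (1-EY_{ij})^\Delta, \qquad EZ_i^2 = \bigl(E(1-Y_{ij})^2\bigr)^\Delta.
\]
Substituting \eqref{e:AppendixY1} into the first identity yields \eqref{e:AppendixZ1}, and substituting \eqref{e:AppendixY3} yields \eqref{e:AppendixZ3}. For the second moments I would expand
\[
E(1-Y_{ij})^2 = 1 - 2EY_{ij} + EY_{ij}^2
\]
and plug in the relevant expressions from Lemma \ref{l:AppendixY}. After consolidating the constant term into $((k-1)/k)^2$, the coefficients of $x_n-1/k$ sum to $\frac{2}{k-1}+\frac{k-2}{k(k-1)}=\frac{3k-2}{k(k-1)}$ for $i=1$ and to $-\frac{2}{(k-1)^2}+\frac{k^2-2k+2}{k(k-1)^2}=\frac{k^2-4k+2}{k(k-1)^2}$ for $i\neq 1$, while the coefficients of $z_n$ come directly from \eqref{e:AppendixY2} and \eqref{e:AppendixY4}; this reproduces exactly the bases of the $\Delta$-th powers in \eqref{e:AppendixZ2} and \eqref{e:AppendixZ4}.

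For the covariance inequality \eqref{e:AppendixZ5}, independence across $j$ gives
\[
EZ_{i_1}Z_{i_2} = \prod_{j=1}^\Delta E\bigl[(1-Y_{i_1 j})(1-Y_{i_2 j})\bigr].
\]
Expanding each factor and invoking \eqref{e:AppendixY5},
\[
E\bigl[(1-Y_{i_1 j})(1-Y_{i_2 j})\bigr] = 1 - EY_{i_1 j} - EY_{i_2 j} + EY_{i_1 j}Y_{i_2 j} \leq (1-EY_{i_1 j})(1-EY_{i_2 j}),
\]
and taking the product over $j$ gives $EZ_{i_1}Z_{i_2}\leq EZ_{i_1}\cdot EZ_{i_2}$, i.e., $\mathrm{Cov}(Z_{i_1},Z_{i_2})\leq 0$.

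The main obstacle is nothing beyond bookkeeping: verifying that after substitution the coefficients of $x_n-1/k$ and $z_n$ in the second-moment expansion collapse to the stated forms. The only structural input is the conditional i.i.d.\ structure of the subtrees given $\sigma_\rho$, which reduces every quantity to a product over $j$ of one-tree quantities already computed in Lemma \ref{l:AppendixY}; the covariance bound is then a clean consequence of the one-step negative correlation \eqref{e:AppendixY5} together with the fact that products of nonnegatively correlated factors remain nonnegatively correlated.
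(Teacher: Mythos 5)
Your proof is correct and follows essentially the same route as the paper: conditional independence of the subtree vectors $Y_j$ turns each moment of $Z_i$ into a $\Delta$-th power of a single-$j$ expectation taken from Lemma \ref{l:AppendixY}, and the covariance bound comes from applying \eqref{e:AppendixY5} factor-by-factor, exactly as in the paper (which leaves the second-moment bookkeeping implicit, while you carry it out and get the stated coefficients). The only nitpick is your closing phrase about ``products of nonnegatively correlated factors''; the displayed argument is what actually does the work and is fine, since independence across $j$ plus $0\leq Y_{ij}\leq 1$ lets the per-$j$ inequalities multiply.
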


\begin{proof}
By equation \eqref{e:AppendixY1} we have that
\begin{align*}
E Z_1 &= E\prod_{j=1}^\Delta (1-Y_{1j})\\
&= \left(1-\left(\frac1k - \frac1{k-1}\left(x_n-\frac1k\right)\right)\right)^\Delta\\
&= \left(\frac{k-1}k +
\frac1{k-1}\left(x_n-\frac1k\right)\right)^\Delta.
\end{align*}
which establish equation \eqref{e:AppendixZ1}.  Equations
\eqref{e:AppendixZ2}, \eqref{e:AppendixZ3} and \eqref{e:AppendixZ4}
follow similarly. Using equation \eqref{e:AppendixY5} we have that
for $1 \leq i_1 < i_2 \leq k$,
\begin{align*}
E Z_{i_1} Z_{i_2} &= E\prod_{j=1}^\Delta (1-Y_{i_1 j})(1-Y_{i_2 j})\\
&\leq \prod_{j=1}^\Delta E(1-Y_{i_1 j})E(1-Y_{i_2 j})\\
&= E Z_{i_1} EZ_{i_2}
\end{align*}
which establishes equation \eqref{e:AppendixZ5}.
\end{proof}

\bibliographystyle{plain}
\bibliography{allbib}

\end{document}